\newtheorem{theorem}{Theorem}[section]
\newtheorem*{theorem*}{Theorem}
\newtheorem{lemma}[theorem]{Lemma}
\newtheorem*{lemma*}{Lemma}
\newtheorem{corollary}[theorem]{Corollary}
\newtheorem{proposition}[theorem]{Proposition}
\newtheorem{remark}[theorem]{Remark}
\newtheorem{definition}[theorem]{Definition}
\def\revddots{\mathinner{\mkern1mu\raise\p@
\vbox{\kern7\p@\hbox{.}}\mkern2mu
\raise4\p@\hbox{.}\mkern2mu\raise7\p@\hbox{.}\mkern1mu}}
\newcommand{\bgl}{\begin{equation}} 
\newcommand{\egl}{\end{equation}}
\newcommand{\bgloz}{\begin{equation*}} 
\newcommand{\egloz}{\end{equation*}}
\newcommand{\bgln}{\begin{eqnarray}} 
\newcommand{\egln}{\end{eqnarray}}
\newcommand{\bglnoz}{\begin{eqnarray*}} 
\newcommand{\eglnoz}{\end{eqnarray*}}
\newcommand{\btheo}{\begin{theorem}}
\newcommand{\etheo}{\end{theorem}}
\newcommand{\btheooz}{\begin{theorem*}}
\newcommand{\etheooz}{\end{theorem*}}
\newcommand{\blemma}{\begin{lemma}}
\newcommand{\elemma}{\end{lemma}}
\newcommand{\blemmaoz}{\begin{lemma*}}
\newcommand{\elemmaoz}{\end{lemma*}}
\newcommand{\bproof}{\begin{proof}}
\newcommand{\eproof}{\end{proof}}
\newcommand{\bbew}{\begin{beweis}}
\newcommand{\ebew}{\end{beweis}}
\newcommand{\bremark}{\begin{remark}\em}
\newcommand{\eremark}{\end{remark}}
\newcommand{\bdefin}{\begin{definition}}
\newcommand{\edefin}{\end{definition}}
\newcommand{\bprop}{\begin{proposition}}
\newcommand{\eprop}{\end{proposition}}
\newcommand{\bcor}{\begin{corollary}}
\newcommand{\ecor}{\end{corollary}}
\newcommand{\bfa}{\begin{cases}} 
\newcommand{\efa}{\end{cases}}
\newcommand{\cH}{\mathcal H}
\newcommand{\cL}{\mathcal L}
\newcommand{\cM}{\mathcal M}
\newcommand{\cR}{\mathcal R}
\def\Az{\mathbb{A}}
\def\Cz{\mathbb{C}}
\def\Nz{\mathbb{N}}
\def\Qz{\mathbb{Q}}
\def\Rz{\mathbb{R}}
\def\Zz{\mathbb{Z}}
\def\1z{\mathbb{1}}
\newcommand{\fA}{\mathfrak A}
\newcommand{\an}[1]{``#1''} 
\newcommand{\ti}{\tilde}
\newcommand{\ri}{\rightarrow}
\newcommand{\ma}{\mapsto} 
\newcommand\onto{\twoheadrightarrow} 
\newcommand\into{\hookrightarrow} 
\newcommand{\LRarr}{\Leftrightarrow} 
\def\SEMI{\mbox{$\times\kern-2pt\vrule height5pt width.6pt \kern3pt $}}
\newcommand{\id}{{\rm id}}
\newcommand{\tei}{\mid} 
\newcommand{\Ad}{{\rm Ad\,}}
\newcommand{\rk}{{\rm rk}\,}
\newcommand{\img}{{\rm im\,}}
\renewcommand{\ker}{{\rm ker}\,}
\newcommand{\reg}{^\times} 
\newcommand{\pos}{_{>0}} 
\newcommand{\nneg}{_{\geq 0}} 
\newcommand{\co}{^{\complement}} 
\newcommand{\abs}[1]{\lvert#1\rvert} 
\newcommand{\defeq}{\mathrel{:=}} 
\newcommand{\dop}{\text{: }} 
\newcommand{\falls}{\text{ if }} 
\newcommand{\sonst}{\text{ else}} 
\newcommand{\fa}{\text{ for all }} 
\newcommand{\ilim}{\varinjlim} 
\newcommand{\dotcup}{\ensuremath{\mathaccent\cdot\cup}} 
\newcommand{\extalg}{\Lambda^* \,} 
\newcommand{\Rbar}{\overline{R}} 
\newcommand{\lge}{\left\{} 
\newcommand{\rge}{\right\}} 
\newcommand{\lru}{\left(} 
\newcommand{\rru}{\right)} 
\newcommand{\leck}{\left[} 
\newcommand{\reck}{\right]} 
\newcommand{\lsp}{\left\langle} 
\newcommand{\rsp}{\right\rangle} 
\newcommand{\rukl}[1]{\lru #1 \rru} 
\newcommand{\eckl}[1]{\leck #1 \reck} 
\newcommand{\gekl}[1]{\lge #1 \rge} 
\newcommand{\spkl}[1]{\lsp #1 \rsp} 
\newcommand{\menge}[2]{\gekl{ #1 \dop #2 }} 
\newcommand{\eub}[1]{\underline{E}#1}
\DeclareMathOperator{\asmb}{asmb}
\DeclareMathOperator{\ind}{ind}
\DeclareMathOperator{\pt}{\{\bullet\}}
\DeclareMathOperator{\res}{res}
\begin{document}

\title[K-theory for ring C*-algebras]{K-theory for ring C*-algebras -- the case of number fields with higher roots of unity}

\author{Xin Li and Wolfgang L{\"u}ck}

\subjclass[2010]{Primary 46L05, 46L80; Secondary 11R04}

\thanks{\scriptsize{Research supported by the Deutsche Forschungsgemeinschaft (SFB 878), the ERC through AdG 267079 and the Leibniz-award of the second named author.}}

\begin{abstract}
We compute K-theory for ring C*-algebras in the case of higher roots of unity and thereby completely determine the K-theory for ring C*-algebras attached to rings of integers in arbitrary number fields.
\end{abstract}

\maketitle

\setcounter{tocdepth}{3}


\setlength{\parindent}{0pt} \setlength{\parskip}{0.5cm}

\section{Introduction}

Recently, a new type of constructions was introduced in the theory of operator algebras, so-called ring C*-algebras. The construction goes as follows: Given a ring $R$, take the Hilbert space $\ell^2(R)$ where $R$ is viewed as a discrete set. Consider the C*-algebra generated by all addition and multiplication operators induced by ring elements. This is the reduced ring C*-algebra of $R$. It is denoted by $\fA[R]$. Such an algebra was first introduced and studied by J. Cuntz in \cite{Cun} in the special case $R = \Zz$. As a next step, J. Cuntz and the first named author considered the case of integral domains satisfying a certain finiteness condition in \cite{Cu-Li1}. Motivating examples for such rings are given by rings of integers from algebraic number theory. It turns out that the associated ring C*-algebras carry very interesting structures and admit surprising alternative descriptions (see \cite{Cu-Li1} and \cite{Cu-Li2}). Finally, the most general case of rings without left zero-divisors was treated in \cite{Li}.

Of course, whenever new constructions of C*-algebras appear, one of the first problems is to compute their topological K-theory. Usually, this helps a lot in understanding the inner structure of the C*-algebras. In our situation, it even turns out that the ring C*-algebras attached to rings of integers are Kirchberg algebras satisfying the UCT (see \cite{Cu-Li1}, \S~3 and \cite{Li}, \S~5). For such C*-algebras, topological K-theory is a complete invariant. This is why computing K-theory is of particular interest and importance. The first K-theoretic computations were carried out in \cite{Cu-Li2} and \cite{Cu-Li3} for ring C*-algebras attached to rings of integers, but only in the special case where the roots of unity in the number field are given by $+1$ and $-1$. The reason why the general case could not be treated was that a K-theoretic computation for a certain group C*-algebra was missing.

In the present paper, we treat the remaining case of higher roots of unity. The missing ingredient is provided by \cite{La-Lü}, where for each number field, the K-theory of the group C*-algebra attached to the semidirect product of the additive group of the ring of integers by the multiplicative group of roots of unity in the number field has been computed. This computation serves as a starting point for our present paper and allows us to follow the strategy from \cite{Cu-Li2} to completely determine K-theory for ring C*-algebras associated with rings of integers in number fields.

Let us now formulate our main results. Let $K$ be a number field, i.e., a finite field extension of $\Qz$. The ring of integers in $K$, i.e., the integral closure of $\Zz$ in $K$, is denoted by $R$. Let $\fA[R]$ be the ring C*-algebra of $R$ defined at the beginning of the introduction (see also \S~\ref{review}). Moreover, the multiplicative group $K \reg$ always admits a decomposition of the form $K \reg = \mu \times \Gamma$ where $\mu$ is the group of roots of unity in $K$ and $\Gamma$ is a free abelian subgroup of $K \reg$. Here is our result treating the case of higher roots of unity:

\btheo
\label{thm1}
Assume that our number field contains higher roots of unity, i.e., $\abs{\mu}>2$. Then
$K_*(\fA[R]) \cong K_0(C^*(\mu)) \otimes_{\Zz} \extalg (\Gamma)$.
\etheo
The isomorphism above is meant as an isomorphism of $\Zz / 2 \Zz$-graded abelian groups. Here $K_*(\fA[R])$ is the $\Zz / 2 \Zz$-graded abelian group $K_0(\fA[R]) \oplus K_1(\fA[R])$. The exterior $\Zz$-algebra $\extalg (\Gamma)$ over $\Gamma$ is endowed with its canonical grading, the group $K_0(C^*(\mu))$ is trivially graded, and we take graded tensor products.

This theorem is the main result of this paper. In combination with the results from \cite{Cu-Li2} and \cite{Cu-Li3}, it gives the following complete description of the K-theory of $\fA[R]$ without restrictions on $\mu$:
\btheo
\label{thm2}
With the same notations as in the previous theorem (but without the assumption $\abs{\mu}>2$), we have
\bgloz
  K_*(\fA[R])
  \cong 
  \bfa 
    K_0(C^*(\mu)) \otimes_{\Zz} \extalg (\Gamma) & \falls \# \gekl{v_{\Rz}} \text{ is even}, \\
    \extalg (\Gamma) & \falls \# \gekl{v_{\Rz}} \text{ is odd},
  \efa
\egloz
again as $\Zz / 2 \Zz$-graded abelian groups.
\etheo
Here $\# \gekl{v_{\Rz}}$ denotes the number of real places of $K$. Note that since we are just identifying $\Zz / 2 \Zz$-graded abelian groups, our K-theoretic formulas could be further simplified. But we present the formulas in this way because this description of K-theory naturally comes out of our computations.

Using the classification result for UCT Kirchberg algebras due to E. Kirchberg and C. Phillips (see \cite[Chapter~8]{Ror}), we obtain from our K-theoretic computations
\bcor
\label{cor}
Given two arbitrary rings of integers $R_1$ and $R_2$ in number fields, the corresponding ring C*-algebras $\fA[R_1]$ and $\fA[R_2]$ are always isomorphic.
\ecor

This result should be contrasted with the observation due to J. Cuntz and C. Deninger that the C*-dynamical system $(\fA[R],\Rz,\sigma)$ (the higher dimensional analogue of the system introduced in \cite{Cun}, it is also the analogue for ring C*-algebras of the system introduced in \cite{C-D-L}) determines the number field in the case of Galois extensions of $\Qz$.

This paper is structured as follows: We start with a review on ring C*-algebras attached to rings of integers (\S~\ref{review}). In the main part of the paper (\S~\ref{mainsec}), we compute the K-theory for ring C*-algebras following the same strategy as in \cite{Cu-Li2}. Here we use results from \cite{La-Lü} on the K-theory of certain group C*-algebras (\S~\ref{K(group-C)}). In the last section, we show how the formalism of the Baum-Connes conjecture can be used to deduce injectivity for certain homomorphisms on the level of K-theory. To do so, we apply tools from algebraic topology. This injectivity result is used in \S~\ref{mainsec}, but its proof is given in \S~\ref{injectivity} because it is somewhat independent from the main part.

\section{Review}
\label{review}

From now on, let $K$ be a number field, i.e., a finite field extension of the rational numbers. Let $R$ be the ring of integers of $K$, i.e., $R$ is the integral closure of $\Zz$.

First of all, let us recall the construction of ring C*-algebras: We consider the Hilbert space $\ell^2(R)$ with its canonical orthonormal basis $\menge{\varepsilon_r}{r \in R}$. Then we use addition and multiplication in $R$ to define unitaries $U^b$ via $\varepsilon_r \ma \varepsilon_{b+r}$ and isometries $S_a$ via $\varepsilon_r \ma \varepsilon_{ar}$ for all $b$ in $R$, $a$ in $R\reg = R \setminus \gekl{0}$. The (reduced) ring C*-algebra is given by $\fA[R] \defeq C^*(\gekl{U^b},\gekl{S_a})$, the smallest involutive norm-closed algebra of bounded operators on $\ell^2(R)$ containing the families $\gekl{U^b}$ and $\gekl{S_a}$. Note that we write $\gekl{U^b}$ for $\menge{U^b}{b \in R}$ and $\gekl{S_a}$ for $\menge{S_a}{a \in R\reg}$. We use analogous notations for other families of generators as well. It turns out that $\fA[R]$ is isomorphic to the universal C*-algebra generated by unitaries $\menge{u^b}{b \in R}$ and isometries $\menge{s_a}{a \in R \reg}$ satisfying
\bglnoz
  &I.& u^b s_a u^d s_c = u^{b+ad} s_{ac} \\
  &II.& \sum u^b s_a s_a^* u^{-b} = 1
\eglnoz
where we sum over $R/aR = \menge{b+aR}{b \in R}$ in II. More precisely, Relation I implies that each of the summands $u^b s_a s_a^* u^{-b}$ only depends on the coset $b+aR$, not on the particular representative of the coset. Thus we obtain one summand for each coset in $R/aR$ and we sum up these elements in Relation II. Here $aR$ is the principal ideal of $R$ generated by $a$. We write $e_a$ for the range projection $s_a s_a^*$ of $s_a$.

From the description of $\fA[R]$ as a universal C*-algebra, it follows that $\fA[R] \cong C(\Rbar) \rtimes^e (R \rtimes R \reg)$. Here $\rtimes^e$ stands for \an{semigroup crossed product by endomorphisms}. The $R \rtimes R\reg$-action we consider is given by affine transformations as follows: $R$ sits as a subring in its profinite completion $\Rbar$ and thus acts additively and multiplicatively. Furthermore, it follows from this that $\fA[R]$ is Morita equivalent to the crossed product $C_0(\Az_f) \rtimes K \rtimes K \reg$, where $\Az_f$ is the finite adele space over $K$ and $K \rtimes K \reg$ act on $C_0(\Az_f)$ via affine transformations.

At this point, the duality theorem enters the game. It says that $C_0(\Az_{\infty}) \rtimes K \rtimes K \reg$ is Morita equivalent to $C_0(\Az_f) \rtimes K \rtimes K \reg$. For the first crossed product, we let $K \rtimes K \reg$ act on $C_0(\Az_{\infty})$ via affine transformations where $\Az_\infty$ is the infinite adele space of $K$. So on the whole, we obtain $\fA[R] \sim_M C_0(\Az_{\infty}) \rtimes K \rtimes K \reg$. Actually, the crossed products $C_0(\Az_{\infty}) \rtimes K$ and $C(\Rbar) \rtimes R$ are Morita equivalent in a $R\reg$-equivariant way. The reason is that the imprimitivity bimodule from \cite[\S~4]{Cu-Li2} carries a canonical $R\reg$-action which is compatible with the $R\reg$-actions on $C_0(\Az_{\infty}) \rtimes K$ and $C(\Rbar) \rtimes R$. As a consequence, we get that for every (multiplicative) subgroup $\Gamma$ of $K\reg$, the crossed products $C_0(\Az_{\infty}) \rtimes K \rtimes \Gamma$ and $C_0(\Gamma \cdot \Rbar) \rtimes (\Gamma \cdot R) \rtimes \Gamma$ are Morita equivalent (see \cite[Theorem~4.1]{Cu-Li2}). Here $\Gamma \cdot \Rbar$ is the subring of $\Az_f$ generated by $\Gamma$ and $\Rbar$, and $\Gamma \cdot R$ is the subring of $K$ generated by $\Gamma$ and $R$.

\section{K-theory for certain group C*-algebras}
\label{K(group-C)}

We now turn to the case of a number field $K$ and present the proof of Theorem~\ref{thm1}. Let $R$ be the ring of integers in $K$. Moreover, let $\mu$ be the group of roots of unity in $K$. This group is always a finite cyclic group generated by a root of unity, say $\zeta$.

The starting point for our K-theoretic computations is the work of M. Langer and the second named author on the K-theory of certain group C*-algebras. More precisely, in \cite{La-Lü}, the K-theory of the group C*-algebra of $R \rtimes \mu$ has been computed. Here $R \rtimes \mu$ is the semidirect product obtained from the multiplicative action of $\mu$ on the additive group $(R,+)$. The corresponding group C*-algebra is denoted by $C^*(R \rtimes \mu)$. It is very useful for our purposes that it is even possible to give an almost complete list of generators for the corresponding K-groups. Let us now summarize the results from \cite{La-Lü}.

To do so, we first need to introduce some notation. The additive group $(R,+)$ of our ring $R$ sits as a subgroup in $R \rtimes \mu$. Let $\iota: R \to R \rtimes \mu$ be the canonical inclusion, and denote by $\iota_*$ the homomorphism $K_0(C^*(R)) \to K_0(C^*(R \rtimes \mu))$ induced by $\iota$ on $K_0$. Moreover, given a finite subgroup $M$ of $R \rtimes \mu$, consider the canonical projection $M \onto \gekl{e}$ from $M$ onto the trivial group. This projection induces a homomorphism $C^*(M) \to \Cz$ of the group C*-algebras, hence a homomorphism on $K_0$: $K_0(C^*(M)) \to K_0(\Cz)$. Let us denote the kernel of this homomorphism by $\widetilde{R}_\Cz(M)$. The canonical inclusion $M \to R \rtimes \mu$ induces a homomorphism $\iota_M: C^*(M) \to C^*(R \rtimes \mu)$, hence a homomorphism $K_0(C^*(M)) \to K_0(C^*(R \rtimes \mu))$. Restricting this homomorphism to $\widetilde{R}_\Cz(M)$, we obtain $(\iota_M)_*: \widetilde{R}_\Cz(M) \to K_0(C^*(R \rtimes \mu))$. Here are the main results from \cite{La-Lü}:

\btheo[Langer-L\"uck]
\label{LL}
With the notations from above, we have
\begin{enumerate}
\item[($*$)]
$K_0(C^*(R \rtimes \mu))$ is finitely generated and torsion-free.
\item[($**$)]
Let $\cM$ be the set of conjugacy classes of maximal finite subgroups of $R \rtimes \mu$. Then $\sum_{(M) \in \cM} (\iota_M)_*: \bigoplus_{(M) \in \cM} \widetilde{R}_\Cz(M) \to K_0(C^*(R \rtimes \mu))$ is injective, i.e., for every $(M) \in \cM$, the map $(\iota_M)_*$ is injective and for every $(M_1)$, $(M_2)$ in $\cM$ with $(M_1) \neq (M_2)$ we have $\img((\iota_{M_1})_*) \cap \img((\iota_{M_2})_*) = \gekl{0}$. Moreover, $\img(\iota_*) \cap \rukl{\sum_{(M) \in \cM} \img((\iota_M)_*)} = \gekl{0}$ and $\img(\iota_*) + \rukl{\sum_{(M) \in \cM} \img((\iota_M)_*)}$ is of finite index in $K_0(C^*(R \rtimes \mu))$.
\item[($*$$*$$*$)]
$K_1(C^*(R \rtimes \mu))$ vanishes.
\end{enumerate}
\etheo
\bproof
($*$) is \cite[Theorem~0.1, (iii)]{La-Lü}. ($**$) is \cite[Theorem~0.1, (ii)]{La-Lü}. Note that the maps $\iota_M$ in our notation are denoted by $i_M$ in \cite{La-Lü}, and that $\iota$ in our notation is denoted by $k$ in \cite{La-Lü}. The group $\widetilde{R}_\Cz(M)$ coincides with the corresponding one in \cite{La-Lü} upon the canonical identification of the representation ring $R_\Cz(M)$ of $M$ with $K_0(C^*(M))$ as abelian groups. Furthermore, ($*$$*$$*$) is  \cite[Theorem~0.1, (iv)]{La-Lü}.
\eproof

Let us now describe $K_0(C^*(R \rtimes \mu))$ in a way which is most convenient for our K-theoretic computations. The idea is to use ($*$) and ($**$) from Theorem~\ref{LL} to decompose $K_0(C^*(R \rtimes \mu))$ into direct summands. However, we cannot simply use the subgroups $\img(\iota_*)$ and $\img((\iota_M)_*)$ for $(M) \in \cM$ which appear in ($**$) because these subgroups might not be direct summands. To solve this problem, we proceed as follows: First of all, we set
\bgl
\label{defKinf}
  K_{inf} \defeq \menge{x \in K_0(C^*(R \rtimes \mu))}{\exists \ N \in \Zz \pos \text{ such that } Nx \in \img(\iota_*)}.
\egl

Now take a finite subgroup $M$ of $R \rtimes \mu$. It has to be a cyclic group. Let $(b,\zeta^i)$ in $R \rtimes \mu$ be a generator of $M$. Note that $i=m/\abs{M}$ (up to multiples of $m$), where $m = \abs{\mu}$. Let $\chi$ be a character of $\Zz/\abs{M}\Zz$, and denote by $p_{\chi}(u^b s_{\zeta^i})$ the spectral projection $\tfrac{1}{\abs{M}} \sum_{j=0}^{\abs{M}-1} \chi(j+\abs{M}\Zz) (u^b s_{\zeta^i})^j$. Then $\img((\iota_M)_*)$ is generated by $\menge{\eckl{p_{\chi}(u^b s_{\zeta^i})}}{1 \neq \chi \in \widehat{\Zz/\abs{M}\Zz}}$. Here $\eckl{\cdot}$ denotes the $K_0$-class of the projection in question and $1 \in \widehat{\Zz/\abs{M}\Zz}$ is the trivial character.

It is then clear that the $K_0$-classes $\gekl{\eckl{p_{\chi}(u^b s_{\zeta^i})}}$ for $(\mu) \neq (M) \in \cM$, $M = \spkl{(b,\zeta^i)}$ and $1 \neq \chi \in \widehat{\Zz/\abs{M}\Zz}$ form a $\Zz$-basis of $\sum_{(\mu) \neq (M) \in \cM} \img((\iota_M)_*)$. Let us enumerate these $K_0$-classes $\gekl{\eckl{p_{\chi}(u^b s_{\zeta^i})}}$ by $y_1, y_2, y_3, \dotsc, y_{\rk_{fin}^{\co}}$, where $\rk_{fin}^{\co}$ is the rank of $\sum_{(\mu) \neq (M) \in \cM} \img((\iota_M)_*)$. As $K_0(C^*(R \rtimes \mu))$ is free abelian, we can recursively find $\overline{y}_1, \overline{y}_2, \overline{y}_3, \dotsc, \overline{y}_{\rk_{fin}^{\co}}$ in $K_0(C^*(R \rtimes \mu))$ such that for every $1 \leq j \leq \rk_{fin}^{\co}$,
\bglnoz
  && K_{inf} + \spkl{\overline{y}_1, \dotsc, \overline{y}_j} \\
  &=& \menge{x \in K_0(C^*(R \rtimes \mu))}{\exists \ N \in \Zz \pos \text{ such that } Nx \in K_{inf} + \spkl{y_1, \dotsc, y_j}}.
\eglnoz
By construction, these elements $\overline{y}_1, \overline{y}_2, \overline{y}_3, \dotsc, \overline{y}_{\rk_{fin}^{\co}}$ are linearly independent. We set $K_{fin}^{\co} = \spkl{\overline{y}_1, \overline{y}_2, \overline{y}_3, \dotsc, \overline{y}_{\rk_{fin}^{\co}}}$. By construction, $K_{inf} \cap K_{fin}^{\co} = \gekl{0}$. Finally, $\menge{\eckl{p_{\chi}(s_{\zeta})}}{1 \neq \chi \in \widehat{\Zz/m\Zz}}$ is a $\Zz$-basis of $\img((\iota_\mu)_*)$. Enumerate the elements $\eckl{p_{\chi}(s_{\zeta})}$, $1 \neq \chi \in \widehat{\Zz/m\Zz}$, by $z_1, \dotsc, z_{m-1}$. Again, there exist $\overline{z}_1, \dotsc, \overline{z}_{m-1}$ in $K_0(C^*(R \rtimes \mu))$ with the property that for every $1 \leq l \leq m-1$,
\bglnoz
  && K_{inf} + K_{fin}^{\co} + \spkl{\overline{z}_1, \dotsc, \overline{z}_l} \\
  &=& \menge{x \in K_0(C^*(R \rtimes \mu))}{\exists \ N \in \Zz \pos \text{ s.t. } Nx \in K_{inf} + K_{fin}^{\co} + \spkl{z_1, \dotsc, z_l}}.
\eglnoz
It is again clear that $\overline{z}_1, \dotsc, \overline{z}_{m-1}$ are linearly independent. We set $K_{fin}^\mu = \spkl{\overline{z}_1, \dotsc, \overline{z}_{m-1}}$. By construction, we have $(K_{inf} + K_{fin}^{\co}) \cap K_{fin}^\mu = \gekl{0}$. Thus $K_0(C^*(R \rtimes \mu)) = K_{inf} \oplus K_{fin}^{\co} \oplus K_{fin}^\mu$ as $\img(\iota_*) + \rukl{\sum_{(M) \in \cM} \img((\iota_M)_*)}$ is of finite index in $K_0(C^*(R \rtimes \mu))$ by ($**$) from Theorem~\ref{LL}.

\section{K-theory for ring C*-algebras}
\label{mainsec}

Let us recall the strategy of the previous K-theoretic computations from \cite{Cu-Li2} for number field without higher roots of unity. We will use the same strategy to treat the case of higher roots of unity.

The first step is to compute K-theory for the sub-C*-algebra $C^*(\gekl{u^b},s_{\zeta},\gekl{e_a})$ of $\fA[R]$. Recall that $e_a$ is the range projection of $s_a$, i.e., $e_a = s_a s_a^*$. This sub-C*-algebra can be identified with the inductive limit of the system given by the algebras $C^*(\gekl{u^b},s_{\zeta},e_a)$ for $a \in R \reg$. Moreover, we can prove that for fixed $a$ in $R \reg$, the algebra $C^*(\gekl{u^b},s_{\zeta},e_a)$ is isomorphic to a matrix algebra over $C^*(\gekl{u^b},s_{\zeta}) \cong C^*(R \rtimes \mu)$. In this situation, Theorem~\ref{LL} allows us to compute K-theory for $C^*(\gekl{u^b},s_{\zeta},\gekl{e_a})$ using its inductive limit structure.

The next step is to use the duality theorem (\S~\ref{review}) to pass over to the infinite adele space. The main point is to prove that the additive action of $K$ is negligible for K-theory, i.e., that the canonical homomorphism
\bgloz
  C_0(\Az_{\infty}) \rtimes K \reg \ri C_0(\Az_{\infty}) \rtimes K \rtimes K \reg
\egloz
induces an isomorphism on K-theory, at least rationally. This is good enough once we can show that all the K-groups are torsion-free. At this point, we need to know that the canonical homomorphism $C_0(\Az_{\infty}) \rtimes \mu \to C_0(\Az_{\infty}) \rtimes K \rtimes \mu$ is injective on K-theory. The proof of this statement is postponed to \S~\ref{injectivity}.

The last step is to compute K-theory for $C_0(\Az_{\infty}) \rtimes K \reg$ using homotopy arguments and the Pimsner-Voiculescu exact sequence. As we know that $\fA[R]$ is Morita equivalent to $C_0(\Az_{\infty}) \rtimes K \rtimes K \reg$, we finally obtain the K-theory for the ring C*-algebra $\fA[R]$.

\subsection{Identifying inductive limits}

The first step is to compute the K-theory of $C^*(\gekl{u^b},s_{\zeta},\gekl{e_a})$ using its inductive limit structure. Here $C^*(\gekl{u^b},s_{\zeta},\gekl{e_a})$ is the sub-C*-algebra of $\fA[R]$ generated by $\menge{u^b}{b \in R}$, $s_{\zeta}$ and $\menge{e_a}{a \in R \reg}$. First of all, note that for all $a$ and $c$ in $R \reg$, we have $e_a = \sum_{b+cR \in R/cR} u^{ab} e_{ac} u^{-ab}$. Just conjugate Relation II in \S~\ref{review} (for $c$ in place of $a$), $1=\sum u^b e_c u^{-b}$, by $s_a$. Therefore, the C*-algebras $C^*(\gekl{u^b},s_{\zeta},e_a)$ for $a$ in $R \reg$ and the inclusion maps
\bgloz
  \iota_{a,ac}: C^*(\gekl{u^b},s_{\zeta},e_a) \ri C^*(\gekl{u^b},s_{\zeta},e_{ac})
\egloz
form an inductive system. Here $R \reg$ is ordered by divisibility. It is clear that the inductive limit of this system can be identified with $C^*(\gekl{u^b},s_{\zeta},\gekl{e_a})$. Thus our goal is to compute the K-theory of $C^*(\gekl{u^b}, s_{\zeta}, e_a)$ and to determine the structure maps $\iota_{a,ac}$ on K-theory. Note that $C^*(\gekl{u^b}, s_{\zeta}, e_a)$ is obtained from $C^*(\gekl{u^b}, s_{\zeta})$ by adding one single projection $e_a$ and not the whole set of projections $\gekl{e_a}$. 

Let $a$ and $c$ be arbitrary elements in $R \reg$. Choose a minimal system $\cR_a$ of representatives for $R/aR$ in $R$. \an{Minimal} means that for arbitrary elements $b_1$ and $b_2$ in $\cR_a$, the difference $b_1-b_2$ lies in $aR$ (if and) only if $b_1=b_2$. We always choose $\cR_a$ in such a way that $0$ is in $\cR_a$. 

Using the decomposition $R = \dotcup_{b \in \cR_a} (b+aR)$ and the inverse of the isomorphism $\ell^2(R) \cong \ell^2(b+aR); \ \varepsilon_r \ma \varepsilon_{b+ar}$, we can construct the unitary 
\bgloz
  \ell^2(R) = \bigoplus_{b \in \cR_a} \ell^2(b+aR) \cong \bigoplus_{\cR_a} \ell^2(R).
\egloz
Conjugation with this unitary gives rise to an isomorphism 
\bgloz
  \cL(\ell^2(R)) \cong \cL(\ell^2(R/aR)) \otimes \cL(\ell^2(R)), T \ma \sum_{b,b' \in \cR_a} e_{b,b'} \otimes (s_a^* u^{-b} T u^{b'} s_a).
\egloz
Here $e_{b,b'}$ is the canonical rank $1$ operator in $\cL(\ell^2(R/aR))$ corresponding to $b+aR$ and $b'+aR$ sending a vector $\xi$ in $\ell^2(R/aR)$ to $\spkl{\xi,\varepsilon_{b'+aR}} \varepsilon_{b+aR}$. In this formula, $\menge{\varepsilon_{b+aR}}{b \in \cR_a}$ is the canonical orthonormal basis of $\ell^2(R/aR)$.

Let us denote the restriction of this isomorphism to $C^*(\gekl{u^b}, s_{\zeta}, e_{ac})$ by $\vartheta_{ac,c}$.

\blemma
\label{bd--d}
For every $a$ and $c$ in $R \reg$, the image of $\vartheta_{ac,c}$ is $\cL(\ell^2(R/aR)) \otimes (C^*(\gekl{u^b}, s_{\zeta}, e_c))$. Thus $\vartheta_{ac,c}$ induces an isomorphism
\bgloz
  C^*(\gekl{u^b}, s_{\zeta}, e_{ac}) \cong \cL(\ell^2(R/aR)) \otimes (C^*(\gekl{u^b}, s_{\zeta}, e_c)).
\egloz
\elemma

Since $R/aR$ is always finite, we know that $\cL(\ell^2(R/aR))$ is just a matrix algebra. So it does not matter which tensor product we choose. 

\bproof
A direct computation yields 
\bglnoz
  && \vartheta_{ac,c}(u^b e_a u^{-b'}) = e_{b,b'} \otimes 1 \fa b,b' \in \cR_a; \\
  && \vartheta_{ac,c}(u^{ab}) = 1 \otimes u^b \fa b \in R; \\
  && \vartheta_{ac,c}(\sum_{b \in \cR_a} u^b e_a u^{- \zeta b} s_{\zeta}) = 1 \otimes s_{\zeta}; \\
  && \vartheta_{ac,c}(\sum_{b \in \cR_a} u^b e_{ac} u^{-b}) = 1 \otimes e_c. 
\eglnoz
Our claim follows from the observation that $C^*(\gekl{u^b}, s_{\zeta}, e_{ac})$ is generated by 
\bgloz
  u^b e_a u^{-b'} \ (b,b' \in \cR_a) \text{; } u^{ab} \ (b \in R) \text{; } \sum_{b \in \cR_a} u^b e_a u^{- \zeta b} s_{\zeta} 
  \text{ and } \sum_{b \in \cR_a} u^b e_{ac} u^{-b}.
\egloz
\eproof

Let us now fix minimal systems of representatives $\cR_a$ for every $a$ in $R \reg$ as explained before the previous lemma (we will always choose $0 \in \cR_a$). As $R/aR$ is finite for every $a$ in $R \reg$, we know that $\cL(\ell^2(R/aR))$ is simply a matrix algebra of finite dimension. Thus we can use the previous lemma to identify $K_0(C^*(\gekl{u^b}, s_{\zeta}, e_a))$ and $K_0(C^*(\gekl{u^b}, s_{\zeta}))$ via $(\rho_{1,a})_*^{-1} (\vartheta_{a,1})_*$. Here $\rho_{c,a}$ (for $a$ and $c$ in $R \reg$) is the canonical homomorphism 
\bgloz
  C^*(\gekl{u^b}, s_{\zeta}, e_c) \ri \cL(\ell^2(R/aR)) \otimes (C^*(\gekl{u^b}, s_{\zeta}, e_c)); \ x \ma e_{0,0} \otimes x.
\egloz

\blemma
\label{a,ac->1,c}
We have 
\bgloz
  (\rho_{1,ac})_*^{-1} (\vartheta_{ac,1})_* (\iota_{a,ac})_* (\vartheta_{a,1}^{-1})_* (\rho_{1,a})_* = (\rho_{1,c})_*^{-1} (\vartheta_{c,1})_* (\iota_{1,c})_*.
\egloz
\elemma
In other words, under the K-theoretic identifications above, the map $(\iota_{a,ac})_*$ corresponds to $(\iota_{1,c})_*$. This observation is helpful because it says that we only have to determine the homomorphisms $\iota_{1,c}$ on K-theory. 
\bproof
It is immediate that 
\bgl
\label{axc=ac1}
  \rho_{c,a} = \vartheta_{ac,c} \circ \Ad(s_a)
\egl
as homomorphisms $C^*(\gekl{u^b}, s_{\zeta}, e_c) \ri \cL(\ell^2(R/aR)) \otimes (C^*(\gekl{u^b}, s_{\zeta}, e_c))$. Here we mean by $\Ad(s_a)$ the homomorphism $C^*(\gekl{u^b}, s_{\zeta}, e_c) \ri C^*(\gekl{u^b}, s_{\zeta}, e_{ac}); \ x \ma s_a x s_a^*$. It would be more precise to write $\Ad(s_a) \vert_{C^*(\gekl{u^b}, s_{\zeta}, e_c)}$, but it will become clear from the context on which domain $\Ad(s_a)$ is defined. 

We know by Relation I in \S~\ref{review} that 
\bgl
\label{axc=ac2}
  \Ad(s_a) \circ \Ad(s_c) = \Ad(s_{ac}).
\egl
So, using \eqref{axc=ac1}, we can deduce from \eqref{axc=ac2} that 
\bgl
\label{axc=ac}
  \vartheta_{ac,c}^{-1} \circ \rho_{c,a} \circ \vartheta_{c,1}^{-1} \circ \rho_{1,c} = \vartheta_{ac,1}^{-1} \circ \rho_{1,ac}.
\egl

Moreover, $\Ad(s_a) \circ \iota_{1,c} = \iota_{a,ac} \circ \Ad(s_a)$ and \eqref{axc=ac1} imply
\bgl
\label{iotatheta}
  \vartheta_{ac,c}^{-1} \circ \rho_{c,a} \circ \iota_{1,c} = \iota_{a,ac} \circ \vartheta_{a,1}^{-1} \circ \rho_{1,a}.
\egl

Finally, we compute 
\bglnoz
  && (\rho_{1,ac})_*^{-1} (\vartheta_{ac,1})_* (\iota_{a,ac})_* (\vartheta_{a,1}^{-1})_* (\rho_{1,a})_* \\
  &\overset{\eqref{axc=ac}}{=}& 
  (\rho_{1,c})_*^{-1} (\vartheta_{c,1})_* (\rho_{c,a})_*^{-1} (\vartheta_{ac,c})_* (\iota_{a,ac})_* (\vartheta_{a,1}^{-1})_* (\rho_{1,a})_* \\
  &\overset{\eqref{iotatheta}}{=}& (\rho_{1,c})_*^{-1} (\vartheta_{c,1})_* (\rho_{c,a})_*^{-1} (\vartheta_{ac,c})_* 
  (\vartheta_{ac,c}^{-1})_* (\rho_{c,a})_* (\iota_{1,c})_* \\
  &=& (\rho_{1,c})_*^{-1} (\vartheta_{c,1})_* (\iota_{1,c})_*
\eglnoz
\eproof

Therefore it remains to determine 
\bgloz
  (\rho_{1,c})_*^{-1} (\vartheta_{c,1})_* (\iota_{1,c})_*: K_0(C^*(\gekl{u^b}, s_{\zeta})) \ri K_0(C^*(\gekl{u^b}, s_{\zeta})).
\egloz
Let us denote this map by $\eta_c$, i.e., $\eta_c = (\rho_{1,c})_*^{-1} (\vartheta_{c,1})_* (\iota_{1,c})_*$. In conclusion, we have identified the K-theory of $C^*(\gekl{u^b},s_{\zeta},\gekl{e_a})$ with $\ilim_c(K_0(C^*(\gekl{u^b}, s_{\zeta})),\eta_c)$.

\subsection{The structure maps}

First of all, we can canonically identify $C^*(\gekl{u^b},s_{\zeta})$ with $C^*(R \rtimes \mu)$ because $R \rtimes \mu$ is amenable. Therefore, all the results from \S~\ref{K(group-C)} carry over to $C^*(\gekl{u^b},s_{\zeta})$. In the sequel, we use the same notations as in \S~\ref{K(group-C)}, but everything should be understood modulo this canonical isomorphism $C^*(\gekl{u^b},s_{\zeta}) \cong C^*(R \rtimes \mu)$.

To determine $\eta_c$, we use the decomposition $K_0(C^*(\gekl{u^b},s_{\zeta})) = K_{inf} \oplus K_{fin}^{\co} \oplus K_{fin}^\mu$ with the particular $\Zz$-basis $\gekl{\overline{y}_1, \dotsc, \overline{y}_{\rk_{fin}^{\co}}}$ and $\gekl{\overline{z}_1, \dotsc, \overline{z}_{m-1}}$ of $K_{fin}^{\co}$ and $K_{fin}^\mu$, respectively (see \S~{K(group-C)}). Moreover, let $\eckl{1} \in K_0(C^*(\gekl{u^b},s_{\zeta}))$ be the $K_0$-class of the unit in $C^*(\gekl{u^b},s_{\zeta})$ and denote by $\spkl{\eckl{1}}$ the subgroup of $K_0(C^*(\gekl{u^b},s_{\zeta}))$ generated by $\eckl{1}$. As the canonical inclusion $\Cz \cdot 1 \into C^*(\gekl{u^b},s_{\zeta})$ splits (a split is given by $C^*(\gekl{u^b},s_{\zeta}) \cong C^*(R \rtimes \mu) \to C^*(\gekl{e}) \cong \Cz \cdot 1$), it is clear that $\spkl{\eckl{1}}$ is a direct summand of $K_0(C^*(\gekl{u^b},s_{\zeta}))$, hence of $K_{inf}$.

Furthermore, note that it suffices to determine the structure maps $\eta_c$ for $c$ in $\Zz_{>1}$ with the property that $\prod_{i \tei m, 1 \leq i<m} (1-\zeta^i)$ divides $c$ because these elements form a cofinal set in $R \reg$ with respect to divisibility.

Our goal is to prove
\bprop
\label{etac}
There exists a subgroup $K_{inf}^{\co}$ of $K_{inf}$ together with a $\Zz$-basis $\gekl{\overline{x}_1, \dotsc, \overline{x}_{\rk_{inf}^{\co}}}$ of $K_{inf}^{\co}$ such that $K_{inf} = \spkl{\eckl{1}} \oplus K_{inf}^{\co}$ and that with respect to the $\Zz$-basis $\gekl{\eckl{1}, \overline{x}_1, \dotsc, \overline{x}_{\rk_{inf}^{\co}}, \overline{y}_1, \dotsc, \overline{y}_{\rk_{fin}^{\co}}, \overline{z}_1, \dotsc, \overline{z}_{m-1}}$ of $K_0(C^*(\gekl{u^b},s_{\zeta}))$, for every $c$ in $\Zz_{>1}$ with the property that $\prod_{i \tei m, 1 \leq i<m} (1-\zeta^i)$ divides $c$, the homomorphism $\eta_c$ is of the form
\bgloz
  \rukl{
  \begin{array}{c|ccc|c|ccc}
  c^n & & * & & * & & * & \\
  \hline
   & \ddots & & * & & & & \\
  0 & & \ddots & & * & & * & \\
   & 0 & & c^? & & & & \\
  \hline
  0 & & 0 & & 0 & & * & \\
  \hline
   & & & & & 1 & & * \\
  0 & & 0 & & 0 & & \ddots & \\
   & & & & & 0 & & 1 \\
  \end{array}
  }.
\egloz
This matrix is subdivided according to the decomposition $K_0(C^*(\gekl{u^b},s_{\zeta})) = \spkl{\eckl{1}} \oplus K_{inf}^{\co} \oplus K_{fin}^{\co} \oplus K_{fin}^\mu$. Moreover, the diagonal of the box
$
  \begin{array}{|ccc|}
  \hline
  \ddots & & * \\
   & \ddots & \\
  0 & & c^? \\
  \hline
  \end{array}
$
describing the $K_{inf}^{\co}$-$K_{inf}^{\co}$-part of this matrix consists of powers of $c$ with decreasing exponents. The least exponent $?$ can be $0$ only if $n$ is even, and in that case, the $0$-th power $c^0$ can appear only once on the diagonal.
\eprop
The proof of this proposition consists of two parts which are treated in the following two paragraphs.

\subsubsection{The infinite part}
\blemma
\label{etac_Kinf}
For $c \in \Zz_{>1}$, we have $\eta_c(K_{inf}) \subseteq K_{inf}$. Moreover, there is a subgroup $K_{inf}^{\co}$ of $K_{inf}$ and a $\Zz$-basis $\gekl{\overline{x}_1, \dotsc, \overline{x}_{\rk_{inf}^{\co}}}$ of $K_{inf}^{\co}$ such that $K_{inf} = \spkl{\eckl{1}} \oplus K_{inf}^{\co}$ and, for every $c \in \Zz_{>1}$, $\eta_c \vert_{K_{inf}}$, as a map $K_{inf} \to K_{inf}$, is of the form
\bgloz
  \rukl{
  \begin{array}{c|ccc}
   c^n & & * & \\
  \hline
   & \ddots & & * \\
   0 & & \ddots & \\
   & 0 & & c^?
\end{array}
}
\egloz
with respect to the decomposition $K_{inf} = \spkl{\eckl{1}} \oplus K_{inf}^{\co}$ and the chosen $\Zz$-basis of $K_{inf}^{\co}$. Here, as in the proposition, $?$ can be $0$ only if $n$ is even, and in that case, the $0$-th power $c^0$ can only appear at most once on the diagonal. 
\elemma
\bproof
Let us choose a suitable $\Zz$-basis for $K_{inf}$ and determine $\eta_c \vert_{K_{inf}}$. First of all, under the canonical identification $C^*(\gekl{u^b},s_{\zeta}) \cong C^*(R \rtimes \mu)$, the sub-C*-algebra $C^*(\gekl{u^b})$ corresponds to $C^*(R)$. So the inclusion map $\iota: C^*(R) \into C^*(R \rtimes \mu)$ corresponds to the canonical inclusion $C^*(\gekl{u^b}) \into C^*(\gekl{u^b},s_{\zeta})$ which we denote by $\iota$ as well. Let $\omega_1, \dotsc, \omega_n$ be a $\Zz$-basis for $R$ and let $u(i) \defeq u^{\omega_i}$. Since $C^*(\gekl{u^b})$ is isomorphic to $C^*(R) \cong C^*(\Zz^n)$ ($R$ is viewed as an additive group), a $\Zz$-basis for $K_0(C^*(\gekl{u^b}))$ is given by
\bgloz
  \menge{\eckl{u(i_1)} \times \dotsm \times \eckl{u(i_k)}}{i_1 < \dotsb < i_k, k \text{ even}}.
\egloz
Here $\times$ is the exterior product in K-theory as described in \cite{Hig-Roe}. Moreover, $\eckl{\cdot}$ denotes the $K_1$-class of the unitary in question.

Let $\nu_c$ be the endomorphism on $C^*(\gekl{u^b})$ defined by $\nu_c(u^b) = u^{cb}$. We have 
\bgl
\label{thetaiotanu}
  (\vartheta_{c,1} \circ \iota_{1,c} \circ \iota \circ \nu_c) (u^b) = \vartheta_{c,1} (u^{cb}) = 1 \otimes u^b
\egl 
for all $b$ in $R$. Thus $\vartheta_{c,1} \circ \iota_{1,c} \circ \iota \circ \nu_c = (1 \otimes \id) \circ \iota$. We conclude that 
\bgln
\label{etactorus}
  && \eta_c(\iota_*(\eckl{u(i_1)}_1 \times \dotsb \times \eckl{u(i_k)}_1)) \\ 
  &=& (\rho_{1,c})_*^{-1} (\vartheta_{c,1})_* (\iota_{1,c})_* (\iota_*(\eckl{u(i_1)}_1 \times \dotsb \times \eckl{u(i_k)}_1)) \nonumber \\ 
  &=& c^{-k} (\rho_{1,c})_*^{-1} (\vartheta_{c,1})_* (\iota_{1,c})_* (\iota_* (\nu_c)_*(\eckl{u(i_1)}_1 \times \dotsb \times \eckl{u(i_k)}_1)) \nonumber \\ 
  &\overset{\eqref{thetaiotanu}}{=}& c^{-k} (\rho_{1,c})_*^{-1} (1 \otimes \id)_* (\iota_*(\eckl{u(i_1)}_1 \times \dotsb \times \eckl{u(i_k)}_1)) \nonumber \\ 
  &=& c^{n-k} \iota_*(\eckl{u(i_1)}_1 \times \dotsb \times \eckl{u(i_k)}_1). \nonumber
\egln
Now, let $H_k$ be the subgroup of $K_0(C^*(\gekl{u^b}))$ generated by the $K_0$-classes $\eckl{u(i_1)} \times \dotsm \times \eckl{u(i_k)}$ for $i_1 < \dotsb < i_k$ where $k$ is fixed. We have
\bgloz
  K_0(C^*(\gekl{u^b})) = \bigoplus_{k \geq 0 \text{ even}} H_k.
\egloz
We claim that $\ker(\iota_*)$ is compatible with this decomposition, i.e.,
\bgloz
  \ker(\iota_*) = \bigoplus_{k \geq 0 \text{ even}} (H_k \cap \ker(\iota_*)).
\egloz
Proof of the claim: 

Let $h$ be in $\ker(\iota_*)$. We can write
\bgl
\label{h}
  h = \sum_{k \geq 0 \text{ even}} h_k
\egl
with $h_k \in H_k$. We have to show that for every $k$, the summand $h_k$ lies in $\ker(\iota_*)$. Let us assume that there are at least two non-zero summands in \eqref{h}, because otherwise, there is nothing to show. Now, equation~\eqref{etactorus} tells us that
\bgl
\label{etaiota}
  \eta_c \circ \iota_* = \iota_* \circ (\bigoplus_k (c^{n-k} \cdot \id_{H_k}))
\egl
on $K_0(C^*(\gekl{u^b})) = \bigoplus_k H_k$. Thus $(\bigoplus_k (c^{n-k} \cdot \id_{H_k}))(h) = \sum_k c^{n-k} \cdot h_k$ lies in $\ker(\iota_*)$ as well. This implies
\bgloz
  \ker(\iota_*) \ni c^n h - (\bigoplus_k (c^{n-k} \cdot \id_{H_k}))(h) = \sum_{k \geq 2 \text{ even}} (c^n - c^{n-k}) \cdot h_k.
\egloz
Proceeding inductively, we obtain that for every even number $j \geq 2$,
\bgl
\label{diffh}
  \sum_{k \geq j \text{ even}} (c^n - c^{n-k})(c^{n-2} - c^{n-k}) \dotsm (c^{n-j+2} - c^{n-k}) \cdot h_k
\egl
lies in $\ker(\iota_*)$. Taking $j$ to be the highest index for which the summand $h_j$ in \eqref{h} is not zero, the term in \eqref{diffh} will be a non-zero multiple of the highest term in \eqref{h}. As both $K_0(C^*(\gekl{u^b}))$ and $K_0(C^*(\gekl{u^b},s_{\zeta}))$ are free abelian, we conclude that the highest term itself must lie in $\ker(\iota_*)$. Working backwards, we obtain that for every $k$, the summand $h_k$ lies in $\ker(\iota_*)$. This proves our claim.

Now, for every $k$, $H_k \cap \ker(\iota_*) = \ker(\iota_* \vert_{H_k})$ is a direct summand of $H_k$ because $K_0(C^*(\gekl{u^b},s_{\zeta}))$ is free abelian. Thus we can choose subgroups $I_k$ of $H_k$ so that
\bgloz
  H_k = I_k \oplus (H_k \cap \ker(\iota_*)).
\egloz
As $\ker(\iota_*) = \bigoplus_k (H_k \cap \ker(\iota_*))$, we have $K_0(C^*(\gekl{u^b})) = (\bigoplus_k I_k) \oplus \ker(\iota_*)$. We can choose a $\Zz$-basis for $\bigoplus_k I_k$ in $\bigcup_k H_k$. As $H_0 \cap \ker(\iota_*) = \gekl{0}$, we have $I_0 = H_0 = \spkl{\eckl{1}}$ so that we can let $\eckl{1}$ be a basis element. Moreover, $H_n$ is non-trivial only if $n$ is even, and in that case $\rk(H_n)=1$ so that there is at most one basis element in $H_n$.

Now $\iota_*$ maps $\bigoplus_k I_k$ isomorphically into $\img(\iota_*) \subseteq K_0(C^*(\gekl{u^b},s_{\zeta}))$, so that the $\Zz$-basis of $\bigoplus_k I_k$ chosen above is mapped to a $\Zz$-basis of $\img(\iota_*)$. By \eqref{etaiota}, we know that if we order this $\Zz$-basis in the right way (corresponding to the index $k$), we obtain that $\eta_c \vert_{\img(\iota_*)}$ -- as an endomorphism of $\img(\iota_*)$ -- is given by
\bgloz
  \rukl{
  \begin{array}{ccc}
   c^n & & 0 \\
   & \ddots & \\
   0 & & c^?
\end{array}
}
\egloz
where the exponents of $c$ on the diagonal are monotonously decreasing. The entry $c^n$ corresponds to the basis element $\eckl{1}$, and $c^0$ can only appear at most once on the diagonal (if it appears, it has to be in the lower right corner according to our ordering). We enumerate this $\Zz$-basis of $\img(\iota_*)$ by $x_0, \dotsc, x_{\rk_{inf}^{\co}}$ according to our ordering, so $x_0=\eckl{1}$.

By construction (see \eqref{defKinf}), $\spkl{x_0, \dotsc, x_{\rk_{inf}^{\co}}} = \img(\iota_*)$ is of finite index in $K_{inf}$, so that we can choose a $\Zz$-basis $\overline{x}_0, \dotsc, \overline{x}_{\rk_{inf}^{\co}}$ of $K_{inf}$ with the property that
\bgloz
  \spkl{\overline{x}_0,\dotsc,\overline{x}_j} = \menge{x \in K_{inf}}{\text{There exists } N \in \Zz \pos \text{ with } Nx \in \spkl{\overline{x}_0,\dotsc,\overline{x}_j}}
\egloz
for every $0 \leq j \leq \rk(K_{inf})-1 = \rk_{inf}^{\co}$. In particular, we have $\overline{x}_0 = \eckl{1}$. It then follows that $\eta_c(K_{inf}) \subseteq K_{inf}$ and that with respect to the $\Zz$-basis $\gekl{\overline{x_j}}$, the matrix describing $\eta_c \vert_{K_{inf}}$ -- as an endomorphism of $K_{inf}$ -- is of the form
\bgloz
  \rukl{
  \begin{array}{ccc}
   c^n & & * \\
   & \ddots & \\
   0 & & c^?
\end{array}
}.
\egloz
Recall that $?$ can be $0$ only if $n$ is even, and in that case, the $0$-th power $c^0$ can only appear at most once on the diagonal. Now set $K_{inf}^{\co} \defeq \spkl{\overline{x}_1,\overline{x}_2,\dotsc,\overline{x}_{\rk_{inf}^{\co}}}$. Then we have $K_{inf} = \spkl{\eckl{1}} \oplus K_{inf}^{\co}$ by construction, and $\overline{x}_1,\overline{x}_2,\dotsc,\overline{x}_{\rk_{inf}^{\co}}$ is a $\Zz$-basis of $K_{inf}^{\co}$ with the desired properties.
\eproof
This first lemma settles the $K_{inf}$-part.

Up to now, we have only used that $c$ is an integer bigger than 1, the extra condition that $\prod_{i \tei m, 1 \leq i<m} (1-\zeta^i)$ divides $c$ was not used in our arguments up to this point. But for the finite part, this condition plays a crucial role.

\subsubsection{The finite part}

\blemma
\label{etac_Kfin}
Assume that $c$ is an integer bigger than 1 and that $\prod_{i \tei m, 1 \leq i<m} (1-\zeta^i)$ divides $c$. Then
\bgloz
  \eta_c(\eckl{p_\chi(s_\zeta)}) \in \eckl{p_\chi(s_\zeta)} + \sum_{(\mu) \neq (M) \in \cM} \img((\iota_M)_*) \fa \chi \in \widehat{\Zz/m\Zz}
\egloz
and
\bgloz
  \eta_c\rukl{\sum_{(\mu) \neq (M) \in \cM} \img((\iota_M)_*)} \in \spkl{\eckl{1}}.
\egloz
\elemma
\bproof
Let $M$ be a maximal finite subgroup of $R \rtimes \mu$, and choose a generator $(b,\zeta^i) \in R \rtimes \mu$ of $M$. Our aim is to compute $\eta_c(\eckl{p_\chi(u^b s_{\zeta^i})})$ (with $\chi \in \widehat{\Zz/\tfrac{m}{i}\Zz}$). By definition, $\eta_c = (\rho_{1,c})_*^{-1} (\vartheta_{c,1})_* (\iota_{1,c})_*$. So we have to examine $(\vartheta_{c,1} \circ \iota_{1,c}) (u^b s_{\zeta^i})$. Take two elements $d$, $d'$ in the system $\cR_c$ of representatives for $R/cR$. The $(d,d')$-th entry of $(\vartheta_{c,1} \circ \iota_{1,c}) (u^b s_{\zeta^i})$ is given by
\bglnoz
  s_c^* u^{-d} u^b s_{\zeta^i} u^{d'} s_c &=& s_c^* u^{-d+b+\zeta^i d'} s_c s_{\zeta^i} \\
  &=&
  \bfa
    0 \falls -d+b+\zeta^i d' \notin cR, \\
    u^{c^{-1}(-d+b+\zeta^i d')} s_{\zeta^i} \falls -d+b+\zeta^i d' \in cR.
  \efa
\eglnoz
Therefore, the matrix $(\vartheta_{c,1} \circ \iota_{1,c}) (u^b s_{\zeta^i})$ has exactly one non-zero entry in each row and column. In other words, for fixed $d$ in $\cR_c$, there exists exactly one $d' \in \cR_c$ with $-d+b+\zeta^i d' \in cR$, namely the element in $\cR_c$ which represents the coset $\zeta^{-i}(d-b)+cR$. There is only one such element because $\cR_c$ is minimal.

Moreover, $u^b s_{\zeta^i}$ is a cyclic element: Its $\tfrac{m}{i}$-th power is $1$. 

These two observations imply that $\tfrac{1}{m/i} \sum_{j=0}^{\tfrac{m}{i}-1} \chi(j+\tfrac{m}{i}\Zz) (u^b s_{\zeta^i})^j = p_{\chi}(u^b s_{\zeta^i})$ can be decomposed into irreducible summands, and each of these summands has to be a projection. \an{Irreducible} means that once we apply $\vartheta_{c,1} \circ \iota_{1,c}$, we obtain an irreducible matrix. Thus up to conjugation by a permutation matrix, $(\vartheta_{c,1} \circ \iota_{1,c}) (p_{\chi}(u^b s_{\zeta^i}))$ is of the form
\bgloz
  \rukl{
  \begin{array}{ccc}
  p_1 & & 0 \\
  & p_2 & \\
  0 & & \ddots
  \end{array}
  }
\egloz
where the $p_i$ are projections of certain sizes. Of course, conjugation by a permutation matrix does not have any effect in K-theory. This means that we obtain
\bgloz
  \eta_c(\eckl{p_{\chi}(u^b s_{\zeta^i})}) = (\rho_{1,c})_*^{-1} (\eckl{p_1} + \eckl{p_2} + \dotsb)
\egloz
where $\rho_{1,c}$ is the homomorphism 
\bgloz
  C^*(\gekl{u^b},s_{\zeta}) \ri \cL(\ell^2(R/cR)) \otimes C^*(\gekl{u^b},s_{\zeta}); \ x \ma e_{0,0} \otimes x.
\egloz
Here $\cL(\ell^2(R/cR)) \cong M_{c^n}(\Cz)$ and $e_{0,0}$ is a minimal projection. So it remains to find out what these irreducible summands $p_i$ give in K-theory.

First of all, we look at the case $b=0$, $i=1$, i.e., we consider $p_{\chi}(s_{\zeta})$. Irreducible summands of size 1 must be of the form $p_{\chi}(u^b s_{\zeta})$. What we want to show now is that there is only one 1-dimensional summand which gives the class of $p_{\chi}(s_{\zeta})$. To do so, we take a 1-dimensional summand corresponding to the position $d$ (for some $d$ in $\cR_c$). The $(d,d)$-th entry of $(\vartheta_{c,1} \circ \iota_{1,c}) (s_{\zeta})$ is given by $u^{c^{-1}(\zeta d-d)} s_{\zeta}$. By Theorem~\ref{LL} the corresponding projection (i.e., $p_{\chi}(u^{c^{-1}(\zeta d-d)} s_{\zeta})$) gives a $K_0$-class in $\img((\iota_\mu)_*)$ if and only if the subgroups $\spkl{(c^{-1}(\zeta d-d),\zeta)} \text{ and } \spkl{(0,\zeta)}$ of $R \rtimes \mu$ are conjugate. This is equivalent to $c^{-1}(\zeta d-d) \in (\zeta-1) \LRarr d \in cR$. But as $\cR_c$ is minimal, this happens for exactly one element in $\cR_c$ (by our convention, this element has to be $0$, but this is not important at this point). Moreover, if $d$ lies in $cR$, then $\eckl{p_\chi(u^{c^{-1}(\zeta d-d)} s_{\zeta})} = \eckl{p_\chi(s_{\zeta})}$ in $K_0$. So from the 1-dimensional summands we obtain in $K_0$ exactly once the class $\eckl{p_{\chi}(s_{\zeta})}$ and some other classes in $\sum_{(\mu) \neq (M) \in \cM} \img((\iota_M)_*)$.

It remains to examine higher dimensional summands. We want to show that all the higher dimensional summands give rise to $K_0$-classes in $\sum_{(\mu) \neq (M) \in \cM} \img((\iota_M)_*)$. Let us take a summand of size $j$ with $j>1$. This means that the $j$-th power of $(\vartheta_{c,1} \circ \iota_{1,c}) (s_{\zeta})$ has a non-zero diagonal entry, say at the $(d,d)$-th position. This entry is $u^{c^{-1}(\zeta^j d-d)} s_{\zeta^j}$. Now we prove a result in a bit more generality than actually needed at this point. But later on, we will come back to it.

\blemma
\label{MvN}
Assume that for an irreducible summand of $(\vartheta_{c,1} \circ \iota_{1,c}) (p_{\chi}(u^b s_{\zeta^i}))$, $j \in \Zz_{>1}$ is the smallest number such that the $j$-th power of this summand has non-zero diagonal entries. Let one of these non-zero diagonal entries be $u^{\ti{b}} s_{\zeta^{ij}}$ for some $\ti{b}$ in $R$. Then the $K_0$-class of this summand coincides with $\eckl{p_{\ti{\chi}}(u^{\ti{b}} s_{\zeta^{ij}})}$ where $\ti{\chi}$ is the restriction of $\chi \in \widehat{\spkl{\zeta^i}}$ to $\widehat{\spkl{\zeta^{ij}}}$.
\elemma
\bproof[Proof of Lemma~\ref{MvN}]
Up to conjugation by a permutation matrix, the irreducible summand of $u^b s_{\zeta^i}$ we are considering is of the form
\bgloz
  \rukl{
  \begin{array}{cccc}
  0 & & & x_j \\
  x_1 & \ddots & & \\
  & \ddots & \ddots & \\
  0 & & x_{j-1} & 0
  \end{array}
  }.
\egloz
All the entries lie in $C^*(\gekl{u^b},s_{\zeta})$.

The $j$-th power is given by
\bgloz
  \rukl{
  \begin{array}{ccccc}
  x_j x_{j-1} \dotsm x_2 x_1 & & & 0 \\
  0 & x_1 x_j x_{j-1} \dotsm x_2 & & 0 \\
  & & \ddots & \\
  0 & & & x_{j-1} x_{j-2} \dotsm x_1 x_j
  \end{array}
  }.
\egloz
By assumption, $x_j x_{j-1} \dotsm x_2 x_1 = u^{\ti{b}} s_{\zeta^{ij}}$. Then the irreducible summand of 
\bgloz
  p_{\chi}(u^b s_{\zeta^i}) = \tfrac{1}{m/i} \sum_{k=0}^{\tfrac{m}{i}-1} \chi(j) (u^b s_{\zeta^i})^k
\egloz
is given by
\bgloz
  \tfrac{1}{j}
  \rukl{
  \begin{array}{ccc}
  p_{\ti{\chi}}(u^{\ti{b}} s_{\zeta^{ij}}) & p_{\ti{\chi}}(u^{\ti{b}} s_{\zeta^{ij}}) \cdot x_1^* & \dotso \\
  x_1 \cdot p_{\ti{\chi}}(u^{\ti{b}} s_{\zeta^{ij}}) & x_1 \cdot p_{\ti{\chi}}(u^{\ti{b}} s_{\zeta^{ij}}) \cdot x_1^* & \dotso \\
  \vdots & \vdots & \vdots \\
  x_{j-1} \dotsm x_1 \cdot p_{\ti{\chi}}(u^{\ti{b}} s_{\zeta^{ij}}) & x_{j-1} \dotsm x_1 \cdot p_{\ti{\chi}}(u^{\ti{b}} s_{\zeta^{ij}}) \cdot x_1^* & \dotso
  \end{array}
  }.
\egloz
The $k$-th column is given by the product of the first column with $(x_{k-1} \dotsm x_1)^*$ from the right. 

But then,
\bgloz
  \tfrac{1}{\sqrt{j}}
  \rukl{
  \begin{array}{ccc}
  p_{\ti{\chi}}(u^{\ti{b}} s_{\zeta^{ij}}) & 0 & \dotso \\
  \vdots & \vdots & 0 \\
  x_{j-1} \dotsm x_1 \cdot p_{\ti{\chi}}(u^{\ti{b}} s_{\zeta^{ij}}) & 0 & \dotso
  \end{array}
  }
\egloz
is a partial isometry with entries in $C^*(\gekl{u^b},s_{\zeta})$ whose range projection is precisely the irreducible summand from above and whose support projection is
\bgloz
  \rukl{
  \begin{array}{cccc}
  p_{\ti{\chi}}(u^{\ti{b}} s_{\zeta^{ij}}) & 0 & \dotso & 0 \\
  0 & & & \\
  \vdots & & 0 & \\
  0 & & &
  \end{array}
  }.
\egloz
This proves Lemma~\ref{MvN}.
\eproof
\bcor
\label{corMvN}
If in the situation of Lemma~\ref{MvN}, we have $j = \tfrac{m}{i}$, i.e., $u^{\ti{b}} s_{\zeta^{ij}} = 1$, then the corresponding irreducible summand of $(\vartheta_{c,1} \circ \iota_{1,c}) (p_{\chi}(u^b s_{\zeta^i}))$ gives the $K_0$-class $\eckl{1}$.
\ecor

Now let us continue the proof of Lemma~\ref{etac_Kfin}. We go back to the higher dimensional summands of $(\vartheta_{c,1} \circ \iota_{1,c}) (s_{\zeta})$. We were considering the $(d,d)$-th position with entry $u^{c^{-1}(\zeta^j d-d)} s_{\zeta^j}$. Lemma~\ref{MvN} tells us that this irreducible summand gives $\eckl{p_{\ti{\chi}}(u^{c^{-1}(\zeta^j d-d)} s_{\zeta^j})}$. By Theorem~\ref{LL}, this $K_0$-class lies in $\img((\iota_\mu)_*)$ if and only if the corresponding subgroup $\spkl{(c^{-1}(\zeta^j d-d),\zeta^j)}$ is conjugate to a subgroup of $\spkl{(0,\zeta)}$. In case $\zeta^j \neq 1$, this happens if and only if $c^{-1}(\zeta^j d-d) \in (\zeta^j -1) \LRarr d \in cR \LRarr d = 0$ by our choice of $\cR_c$. But for $d=0$, the summand we get is of size $1$ (see above). This contradicts $j>1$. If $\zeta^j = 1$, then we obtain a projection whose $K_0$-class is $\eckl{1}$ by Corollary~\ref{corMvN}.

This proves $\eta_c(\eckl{p_\chi(s_\zeta)}) \in \eckl{p_\chi(s_\zeta)} + \sum_{(\mu) \neq (M) \in \cM} \img((\iota_M)_*) \fa \chi \in \widehat{\Zz/m\Zz}$.

It remains to prove $\eta_c\rukl{\sum_{(\mu) \neq (M) \in \cM} \img((\iota_M)_*)} \in \spkl{\eckl{1}}$. Take a maximal finite subgroup $M$ with $(M) \neq (\mu)$. Let $(b,\zeta^i)$ be a generator of $M$, and consider the element $\eckl{p_{\chi}(u^b s_{\zeta^i})}$. How do the irreducible summands of $(\vartheta_{c,1} \circ \iota_{1,c})(p_{\chi}(u^b s_{\zeta^i}))$ look like? We claim that each of these summands must have size $\tfrac{m}{i}$. To show this, let $j$ be the size of such a summand. This means that there exists $d \in \cR_c$ such that
\bglnoz
  s_c^* u^{-d} (u^b s_{\zeta^i})^j u^d s_c &=& s_c^* u^{-d} u^{b+\zeta^i b+\dotsb+\zeta^{i(j-1)}b} s_{\zeta^{ij}} u^d s_c \\
  &=& s_c^* u^{-d+\tfrac{1-\zeta^{ij}}{1-\zeta^i}b+\zeta^{ij}d} s_c s_{\zeta^{ij}} \neq 0.
\eglnoz
This happens if and only if $-d+\tfrac{1-\zeta^{ij}}{1-\zeta^i}b+\zeta^{ij}d$ lies in $cR$. Now, if $\zeta^{ij} \neq 1$, then we can proceed as follows:
As $\prod_{i \tei m, 1 \leq i<m} (1-\zeta^i)$ divides $c$ by assumption, we know that $1-\zeta^{ij}$ divides $cR$, so that $-d+\tfrac{1-\zeta^{ij}}{1-\zeta^i}b+\zeta^{ij}d \in cR$ implies that $b$ lies in $(1-\zeta^i)R$. But this is a contradiction to $(M) \neq (\mu)$. Thus we must have $\zeta^{ij}=1$, which by minimality of $j$ implies $j=\tfrac{m}{i}$, as claimed.

Therefore all these irreducible summands give the $K_0$-class $\eckl{1}$ (see Corollary~\ref{corMvN}). This proves that 
\bgloz
  \eta_c\rukl{\sum_{(\mu) \neq (M) \in \cM} \img((\iota_M)_*)} \in \spkl{\eckl{1}}.
\egloz
\eproof

\bcor
With respect to the $\Zz$-basis $\overline{y}_1, \dotsc, \overline{y}_{\rk_{fin}^{\co}}$ and $\overline{z}_1, \dotsc, \overline{z}_{m-1}$ of $K_{fin}^{\co}$ and $K_{fin}^{\mu}$, respectively, and with respect to the $\Zz$-basis 
\bgloz
  \gekl{\eckl{1}, \overline{x}_1,\dotsc,\overline{x}_{\rk_{inf}^{\co}},\overline{y}_1,\dotsc,\overline{y}_{\rk_{fin}^{\co}},\overline{z}_1,\dotsc,\overline{z}_{m-1}}
\egloz
of $K_0(C^*(\gekl{u^b},s_{\zeta}))$, for every $c$ in $\Zz_{>1}$ with the property that $\prod_{i \tei m, 1 \leq i<m} (1-\zeta^i)$ divides $c$, we have that $\eta_c \vert_{K_{fin}^{\co} \oplus K_{fin}^{\mu}}: K_{fin}^{\co} \oplus K_{fin}^{\mu} \to K_0(C^*(\gekl{u^b},s_{\zeta}))$ is of the form
\bgloz
  \rukl{
  \begin{array}{c|ccc}
  * & & * & \\
  \hline
  * & & * & \\
  \hline
  0 & & * & \\
  \hline
   & 1 & & * \\
  0 & & \ddots & \\
   & 0 & & 1 \\
  \end{array}
  }.
\egloz
\ecor
\bproof
This follows from Lemma~\ref{etac_Kfin} and the way the basis elements $\overline{y}_1, \dotsc, \overline{y}_{\rk_{fin}^{\co}}$ and $\overline{z}_1, \dotsc, \overline{z}_{m-1}$ were chosen (see \S~\ref{K(group-C)}).
\eproof
With this corollary, together with Lemma~\ref{etac_Kinf}, we have completed the proof of Proposition~\ref{etac}.

\subsection{K-theory for a sub-C*-algebra}

Now we can compute the K-theory of $C^*(\gekl{u^b},s_{\zeta},\gekl{e_a})$. We can also determine $\Ad(s_c) = s_c \sqcup s_c^*$ on K-theory.

\bprop
\label{K-subalg}
We have
\bgloz
  K_0(C^*(\gekl{u^b},s_{\zeta},\gekl{e_a})) \cong \Qz^{\rk(K_{inf})-\delta} \oplus \Zz^\delta \oplus \Zz^{m-1}.
\egloz
Here $\delta$ is $1$ if for all $c$ in $\Zz_{>1}$ divisible by $\prod_{i \tei m, 1 \leq i<m} (1-\zeta^i)$, the least exponent of $c$ in the matrix describing $\eta_c$ (see Proposition~\ref{etac}) is $0$. Otherwise $\delta$ is $1$.

$K_1(C^*(\gekl{u^b},s_{\zeta},\gekl{e_a}))$ vanishes.

Moreover, there exists a $\Qz$-basis of $\Qz^{\rk(K_{inf})-\delta}$ and a $\Zz$-basis of $\Zz^\delta \oplus \Zz^{m-1}$ such that, for all $c \in \Zz_{>1}$ with $\prod_{i \tei m, 1 \leq i<m} (1-\zeta^i)$ dividing $c$, the homomorphism $\Ad(s_c)$ is of the form
\bgl
\label{Adsc-1}
  \rukl{
  \begin{array}{ccc|c|ccc}
  c^{-n} & & * & & & & \\
   & \ddots & & 0 & & 0 & \\
  0 & & \ddots & & & \\
  \hline
   & 0 & & 1 & & * & \\
  \hline  
  & & & & 1 & & * \\
  & 0 & & 0 & & \ddots & \\
  & & & & 0 & & 1
  \end{array}
  }
  \falls \delta = 1
\egl
and 
\bgl
\label{Adsc-0}
  \rukl{
  \begin{array}{ccc|ccc}
  c^{-n} & & * & & & \\
   & \ddots & & & 0 & \\
  0 & & \ddots & & \\
  \hline  
  & & & 1 & & * \\
  & 0 & & & \ddots & \\
  & & & 0 & & 1
  \end{array}
  }
  \falls \delta = 0.
\egl
\eprop
Note that in the first box on the diagonal of these matrices, all the diagonal entries are always strictly less than $1$.
\bproof
We know that $C^*(\gekl{u^b},s_{\zeta},\gekl{e_a})$ can be identified with the inductive limit obtained from the C*-algebras $C^*(\gekl{u^b},s_{\zeta},e_a) \text{, for } a \in R \reg$ and the inclusion maps $\iota_{a,ac}: C^*(\gekl{u^b},s_{\zeta},e_a) \ri C^*(\gekl{u^b},s_{\zeta},e_{ac})$. Using continuity of K-theory, together with Lemma~\ref{bd--d} and Lemma~\ref{a,ac->1,c}, we obtain for $i=0,1$:
\bgloz
  K_i(C^*(\gekl{u^b},s_{\zeta},\gekl{e_a})) \cong \ilim_c(K_i(C^*(\gekl{u^b},s_{\zeta})),\eta_c).
\egloz
From this, it is immediate that $K_1(C^*(\gekl{u^b},s_{\zeta},\gekl{e_a}))$ vanishes by ($*$$*$$*$) in Theorem~\ref{LL}. Moreover, the description for $K_0(C^*(\gekl{u^b},s_{\zeta},\gekl{e_a}))$ can be deduced from the description of $\eta_c$ in Proposition~\ref{etac}.

Concerning the description of $(\Ad(s_c))_*$, let us explain the case $\delta = 1$. The case $\delta = 0$ is similar. We observe that $(\Ad(s_c))_*$ is given by the inverse of the homomorphism on
\bgloz
  K_0(C^*(\gekl{u^b},s_{\zeta},\gekl{e_a})) \cong \ilim_c(K_0(C^*(\gekl{u^b},s_{\zeta})),\eta_c).
\egloz
induced by $\eta_c$. This follows from \eqref{axc=ac1}. We obtain that there exists a $\Qz$-basis of $\Qz^{\rk(K_{inf})-\delta}$ and a $\Zz$-basis of $\Zz^\delta \oplus \Zz^{m-1}$ such that $\Ad(s_c)$ is of the form
\bgl
\label{Adsc-1'}
  \rukl{
  \begin{array}{ccc|c|ccc}
  c^{-n} & & * & & & & \\
   & \ddots & & * & & * & \\
  0 & & \ddots & & & \\
  \hline
   & 0 & & 1 & & * & \\
  \hline  
  & & & & 1 & & * \\
  & 0 & & 0 & & \ddots & \\
  & & & & 0 & & 1
  \end{array}
  }.
\egl
Modifying the $\Zz$-basis if necessary, we can find a new $\Zz$-basis for $\Zz^\delta \oplus \Zz^{m-1}$ such that the two boxes in the upper right corner of \eqref{Adsc-1'} vanish.
\eproof

\subsection{Passing over to the infinite adele space}

We can now compute K-theory for certain crossed products involving the profinite completion of $R$. Using the duality theorem, we are then able to pass over to the infinite adele space.

\bcor
\label{K-fininf1}
We have
\bgln
\label{K_0(finite)}
  && K_0(C(\Rbar) \rtimes R \rtimes \mu) \cong \Qz^{\rk(K_{inf})-\delta} \oplus \Zz^\delta \oplus \Zz^{m-1} \\
\label{K_1(finite)}
  && K_1(C(\Rbar) \rtimes R \rtimes \mu) \cong \gekl{0} \\
\label{K_0(infinite)}
  && K_0(C_0(\Az_{\infty}) \rtimes K \rtimes \mu) \cong \Qz^{\rk(K_{inf})-\delta} \oplus \Zz^\delta \oplus \Zz^{m-1} \\
\label{K_1(infinite)}
  && K_1(C_0(\Az_{\infty}) \rtimes K \rtimes \mu) \cong \gekl{0}.
\egln
\ecor
\bproof
These results follow from the duality theorem (see \S~\ref{review}) and Proposition~\ref{K-subalg}.
\eproof
For the next result, we need Proposition~\ref{K_0-inj}.
\bcor
\label{K-subalg'}
With respect to the same bases as in Proposition~\ref{K-subalg}, we must have that $\Ad(s_c)$ is of the form
\bgloz
  \rukl{
  \begin{array}{ccc|c}
  c^{-n} & & * & \\
   & \ddots & & 0 \\
  0 & & \ddots & \\
  \hline
   & & & \\
   & 0 & & \id_{\Zz^\delta \oplus \Zz^{m-1}} \\
   & & &
  \end{array}
  }
\egloz
for all $c \in \Zz_{>1}$ divisible by $\prod_{i \tei m, 1 \leq i<m} (1-\zeta^i)$.

Moreover, if $n$ is even, $\delta$ must be $1$.
\ecor
\bproof
If $n=[K:\Qz]$ is odd, $\delta$ must vanish and $m$ must be $2$ as $K$ admits an embedding into $\Rz$ so that $\mu$ must be $\gekl{\pm 1}$. So in that case, there is nothing to prove.

Now let us consider the case that $n=[K:\Qz]$ is even. First of all, we know that under the canonical isomorphism $C(\Rbar) \rtimes R \rtimes \mu \cong C^*(\gekl{u^b},s_\zeta,\gekl{e_a})$, the endomorphism $\beta^{(fin)}_c$ of $C(\Rbar) \rtimes R \rtimes \mu$ induced by multiplication with $c$ corresponds to $\Ad(s_c)$. From Proposition~\ref{K-subalg}, we see that $\rk(\ker(\id-(\Ad(s_c)_*))) \leq m$ and that we can only have equality if $\delta = 1$ and $\Ad(s_c)$ is the identity on $\Zz^\delta \oplus \Zz^{m-1}$ in Proposition~\ref{K-subalg}. Thus the same holds for $\rk(\ker(\id-(\beta_c^{(fin)})_*))$.

As against that, we know that $C_0(\Az_{\infty}) \rtimes K \sim_M C(\Rbar) \rtimes R$ in a $R\reg$-equivariant way (see \S~\ref{review}). Therefore, we can identify $K_*(C_0(\Az_\infty) \rtimes K \rtimes \mu)$ with $K_*(C(\Rbar) \rtimes R \rtimes \mu)$ so that $(\beta_c)_*$ ($\beta_c$ is the automorphism of $C_0(\Az_\infty) \rtimes K \rtimes \mu$ induced by multiplication with $c$) corresponds to $(\beta^{(fin)}_c)_*$. Thus $\rk(\ker(\id-(\beta_c^{(fin)})_*)) = \rk(\ker(\id-(\beta_c)_*))$. But the canonical homomorphism $C_0(\Az_{\infty}) \rtimes \mu \to C_0(\Az_{\infty}) \rtimes K \rtimes \mu$ maps into $\ker(\id-(\beta_c)_*)$ in $K_0$ since $\beta_c$ is homotopic to the identity on $C_0(\Az_{\infty}) \rtimes \mu$ (recall that $n$ is even). Moreover, by Proposition~\ref{K_0-inj}, we know that this canonical homomorphism is injective on $K_0$. As $K_0(C_0(\Az_{\infty}) \rtimes \mu) \cong K_0(C^*(\mu)) \cong \Zz^m$ by equivariant Bott periodicity (see \cite[Theorem~20.3.2]{Bla}, $n$ is even), we conclude that $\rk(\ker(\id-(\beta_c)_*)) \geq m$. So $\rk(\ker(\id-(\beta_c^{(fin)})_*))$ must be $m$, and our assertion follows.
\eproof
\bcor
If $K$ has higher roots of unity ($\abs{\mu} > 2$), then
\bgloz
  K_0(C_0(\Az_\infty) \rtimes K \rtimes \mu) \cong \Qz^{\rk(K_{inf})-1} \oplus \Zz^m \text{ and } K_1(C_0(\Az_\infty) \rtimes K \rtimes \mu \cong \gekl{0}.
\egloz
Moreover, there exists a $\Qz$-basis for $\Qz^{\rk(K_{inf})-1}$ and a $\Zz$-basis for $\Zz^m$ such that for $c$ in $\Zz_{>1}$ divisible by $\prod_{i \tei m, 1 \leq i<m} (1-\zeta^i)$, $(\beta_c)_*: K_0(C_0(\Az_\infty) \rtimes K \rtimes \mu) \to K_0(C_0(\Az_\infty) \rtimes K \rtimes \mu)$ is of the form
\bgl
\label{betac}
  \rukl{
  \begin{array}{ccc|c}
  c^{-n} & & * & \\
   & \ddots & & 0 \\
  0 & & \ddots & \\
  \hline
   & & & \\
   & 0 & & \id_{\Zz^m} \\
   & & &
  \end{array}
  }
\egl
\ecor
\bproof
$\abs{\mu} > 2$ implies that $n$ is even. So the second statement in Corollary~\ref{K-subalg'} tells us that $\delta = 1$. Thus our first two statements about $K_0$ and $K_1$ follow from \eqref{K_0(infinite)} and \eqref{K_1(infinite)}. Furthermore, we can identify $K_*(C_0(\Az_\infty) \rtimes K \rtimes \mu)$ with $K_*(C(\Rbar) \rtimes R \rtimes \mu)$ so that $(\beta_c)_*$ corresponds to $(\beta^{(fin)}_c)_*$. We can also identify $C(\Rbar) \rtimes R \rtimes \mu$ with $C^*(\gekl{u^b},s_\zeta,\gekl{e_a})$ so that $\beta^{(fin)}_c$ corresponds to $\Ad(s_c)$. Thus our second statement about $(\beta_c)_*$ follows from Corollary~\ref{K-subalg'}.
\eproof
\bcor
\label{K(inf-with-c)}
If $K$ has higher roots of unity ($\abs{\mu} > 2$), then we have for all $c$ in $\Zz_{>1}$ divisible by $\prod_{i \tei m, 1 \leq i<m} (1-\zeta^i)$ that $K_i(C_0(\Az_\infty) \rtimes K \rtimes (\mu \times \spkl{c})) \cong \Zz^m$ ($i=0,1$).
\ecor
\bproof
Just plug in the results from the last corollary into the Pimsner-Voiculescu sequence for $C_0(\Az_\infty) \rtimes K \rtimes (\mu \times \spkl{c}) \cong (C_0(\Az_\infty) \rtimes K \rtimes \mu) \rtimes_{\beta_c} \Zz$.
\eproof

\subsection{End of proof}
\label{End}

We are now ready to prove our main result. First of all, let us fix one integer $c>1$ with the property that $\prod_{i \tei m, 1 \leq i<m} (1-\zeta^i)$ divides $c$. In addition, we can choose $c_1,c_2,\dotsc$ in $K \reg$ such that $c,c_1,c_2,\dotsc$ are free generators of a free abelian subgroup $\Gamma$ of $K \reg$ with $K \reg = \mu \times \Gamma$. Let $\Gamma_j \defeq \spkl{c,c_1,\dotsc,c_j}$ ($\Gamma_0 = \spkl{c}$).

\bprop
\label{betac=id}
The canonical homomorphism
\bgloz
  C_0(\Az_{\infty}) \rtimes (\mu \times \Gamma_j) \ri C_0(\Az_{\infty}) \rtimes K \rtimes (\mu \times \Gamma_j)
\egloz
is a rational isomorphism for all $i \geq 0$. Moreover, $\beta_{c_{j+1}}$, the automorphism induced by multiplication with $c_{j+1}$, is the identity on $K_*(C_0(\Az_{\infty}) \rtimes K \rtimes (\mu \times \Gamma_j))$.
\eprop
Here $K_*$ stands for the $\Zz / 2 \Zz$-graded abelian group $K_0 \oplus K_1$.
\bproof
We proceed inductively on $j$. Let us start with $j=0$. The Pimsner-Voiculescu exact sequence gives the following exact sequences:
\bglnoz
  \gekl{0} &\ri& K_1(C_0(\Az_{\infty}) \rtimes (\mu \times \Gamma_0)) \ri K_0(C_0(\Az_{\infty}) \rtimes \mu) \\
  &\xrightarrow{\id - (\beta_c)_* = 0}& K_0(C_0(\Az_{\infty}) \rtimes \mu) \ri K_0(C_0(\Az_{\infty}) \rtimes (\mu \times \Gamma_0)) \ri \gekl{0}
\eglnoz
and
\bgln
\label{2ndexseq}
	\gekl{0} &\ri& K_1(C_0(\Az_{\infty}) \rtimes K \rtimes (\mu \times \Gamma_0)) \\
	&\ri& K_0(C_0(\Az_{\infty}) \rtimes K \rtimes \mu) \nonumber \\
	&\xrightarrow{\id - (\beta_c)_* = 
  \rukl{
  \begin{smallmatrix}
  0&0 \\
  0&\id - 
  \rukl{
  \begin{smallmatrix}
  c^{-n} & * \\
  0 & \ddots
  \end{smallmatrix}
  }
  \end{smallmatrix}
  }}& K_0(C_0(\Az_{\infty}) \rtimes K \rtimes \mu) \nonumber \\
  &\ri& K_0(C_0(\Az_{\infty}) \rtimes K \rtimes (\mu \times \Gamma_0)) \ri \gekl{0} \nonumber
\egln
In the first sequence, we have $(\beta_c)_* = \id$ because $\beta_c$ is homotopic to the identity on $C_0(\Az_{\infty}) \rtimes \mu$. In the second sequence, we have plugged in the matrix describing $(\beta_c)_*$ given in \eqref{betac}.

Moreover, naturality of the Pimsner-Voiculescu sequence allows us to connect these two sequences by homomorphisms on K-theory induced by the canonical maps
\bgloz
  C_0(\Az_{\infty}) \rtimes \mu \ri C_0(\Az_{\infty}) \rtimes K \rtimes \mu
\egloz
and
\bgloz
  C_0(\Az_{\infty}) \rtimes (\mu \times \Gamma_0) \ri C_0(\Az_{\infty}) \rtimes K \rtimes (\mu \times \Gamma_0).
\egloz
By Proposition~\ref{K_0-inj}, we know that $C_0(\Az_{\infty}) \rtimes \mu \ri C_0(\Az_{\infty}) \rtimes K \rtimes \mu$ induces an injective map on $K_0$ whose image is contained in the copy of $\Zz^m$ in $K_0(C_0(\Az_{\infty}) \rtimes K \rtimes \mu)$. Comparing the ranks, we deduce that this injective map must be a rational isomorphism when we restrict its image to the copy of $\Zz^m$. Since this copy is isomorphic to $K_i(C_0(\Az_{\infty}) \rtimes K \rtimes (\mu \times \Gamma_0))$ (for $i=0,1$) via the maps in the second exact sequence~\eqref{2ndexseq}, we obtain that the canonical homomorphism $C_0(\Az_{\infty}) \rtimes (\mu \times \Gamma_0) \ri C_0(\Az_{\infty}) \rtimes K \rtimes (\mu \times \Gamma_0)$ induces a rational isomorphism on K-theory.

Now we know that $\beta_{c_1}$ is homotopic to the identity on $C_0(\Az_{\infty}) \rtimes (\mu \times \Gamma_0)$. This implies $(\beta_{c_1})_* = \id$ on
$K_*(C_0(\Az_{\infty}) \rtimes (\mu \times \Gamma_0)).$
As the canonical homomorphism
$C_0(\Az_{\infty}) \rtimes (\mu \times \Gamma_0) \ri C_0(\Az_{\infty}) \rtimes K \rtimes (\mu \times \Gamma_0)$
induces a rational isomorphism on K-theory (see above), we know that
\bgloz
  (\beta_{c_1})_* \otimes_{\Zz} \id_{\Qz} = \id \otimes_{\Zz} \id_{\Qz} \text{ on } K_*(C_0(\Az_{\infty}) \rtimes K \rtimes (\mu \times \Gamma_0)) \otimes_{\Zz} \Qz.
\egloz
But as
$K_*(C_0(\Az_{\infty}) \rtimes K \rtimes (\mu \times \Gamma_0))$
is free abelian (see Corollary~\ref{K(inf-with-c)}, $\Gamma_0 = \spkl{c}$), it follows that $(\beta_{c_1})_*$ must be the identity on
$K_*(C_0(\Az_{\infty}) \rtimes K \rtimes (\mu \times \Gamma_0))$.
This settles the case $j=0$.

The remaining induction step is proven in a similar way. We just have to use that $\beta_{c_{j+1}}$ is homotopic to the identity on $C_0(\Az_{\infty}) \rtimes (\mu \times \Gamma_j)$.
\eproof

\bcor
For every $i \in \Zz \nneg$, we can identify
\bgloz
  K_*(C_0(\Az_{\infty}) \rtimes K \rtimes (\mu \times \Gamma_j)) \text{ with }  K_0(C^*(\mu)) \otimes_{\Zz} \extalg(\Gamma_j)
\egloz
in such a way that the map on K-theory induced by the canonical homomorphism
\bgloz
  C_0(\Az_{\infty}) \rtimes K \rtimes (\mu \times \Gamma_j) \ri C_0(\Az_{\infty}) \rtimes K \rtimes (\mu \times \Gamma_{j+1})
\egloz
corresponds to the canonical map
\bgloz
  K_0(C^*(\mu)) \otimes_{\Zz} \extalg(\Gamma_j) \ri K_0(C^*(\mu)) \otimes_{\Zz} \extalg(\Gamma_{j+1})
\egloz
induced by the inclusion $\Gamma_j \into \Gamma_{j+1}$ for all $j \in \Zz \nneg$.
\ecor
\bproof
This follows inductively on $j$ using $(\beta_{c_{j+1}})_* = \id \text{ on } K_*(C_0(\Az_{\infty}) \rtimes K \rtimes (\mu \times \Gamma_j))$ (see Proposition~\ref{betac=id}) and the Pimsner-Voiculescu exact sequence. The induction starts with Corollary~\ref{K(inf-with-c)}.
\eproof
Finally, we arrive at
\bglnoz
  && K_*(C_0(\Az_{\infty}) \rtimes K \rtimes K \reg \cong \ilim_j K_*(C_0(\Az_{\infty}) \rtimes K \rtimes (\mu \times \Gamma_j) \\
  &\cong& \ilim_j K_0(C^*(\mu)) \otimes_{\Zz} \extalg(\Gamma_j) \cong K_0(C^*(\mu)) \otimes_{\Zz} \extalg(\Gamma).
\eglnoz
This completes the proof of our main result, Theorem~\ref{thm1}.

\bproof[Proof of Corollary~\ref{cor}]
By \cite[Theorem~3.6]{Cu-Li1}, the ring C*-algebras of rings of integers are simple and purely infinite, and by Corollaries~3 and 4 in \cite{Li}, these ring C*-algebras are nuclear and satisfy the UCT. Moreover, these ring C*-algebras are obviously unital and separable, and it is easy to see that for these algebras, the class of the unit in $K_0$ vanishes. Thus \cite[Theorem~8.4.1~(iv)]{Ror} tells us that two such ring C*-algebras are isomorphic if and only if their K-groups are isomorphic. Now our corollary follows from Theorem~\ref{thm2}.
\eproof

\bremark
In \cite[Remark~6.6]{Cu-Li2}, it was observed that the same ideas which lead to the duality theorem also yield
\bgl
\label{Mor_Afull}
  C_0(\Az) \rtimes K \rtimes K\reg \sim_M C^*(K \rtimes K\reg).
\egl
With the same strategy as in the proof of Theorem~\ref{thm1}, we can now compute $K_*(C^*(K \rtimes K\reg))$. We start with computing $K_*(C^*(K \rtimes \mu))$. To this end, we write $C^*(K \rtimes \mu)$ as an inductive limit where all the C*-algebras are given by $C^*(R \rtimes \mu)$ and the connecting maps are induced by multiplication with elements from $R\reg$. We can then use Theorem~\ref{LL} to determine the corresponding inductive limit in K-theory. To complete our computation, we proceed in an analogous manner as in Paragraph~\ref{End}. We choose a free abelian subgroup $\Gamma$ of $K\reg$ such that $K\reg = \mu \times \Gamma$ and then use the Pimsner-Voiculescu sequence iteratively. As a final result, we obtain that the canonical homomorphism $C^*(K\reg) \to C^*(K \rtimes K\reg)$ induces an isomorphism on K-theory. Thus, for every number field $K$, we obtain
\bgloz
  K_*(C_0(\Az) \rtimes K \rtimes K\reg) \cong K_*(C^*(K \rtimes K\reg)) \cong K_*(C^*(K\reg)) \cong K_0(C^*(\mu)) \otimes_{\Zz} \extalg(\Gamma).
\egloz
\eremark

\section{Injectivity of certain inclusions on K-theory}
\label{injectivity}
We want to prove
\bprop
\label{K_0-inj}
For every number field $K$, the homomorphism
\bgloz
  K_0(C_0(\Az_{\infty}) \rtimes \mu) \to K_0(C_0(\Az_{\infty}) \rtimes K \rtimes \mu)
\egloz
induced by the canonical map $C_0(\Az_{\infty}) \rtimes \mu \to C_0(\Az_{\infty}) \rtimes K \rtimes \mu$ is injective.
\eprop
Recall that $K \rtimes \mu$ acts on $C_0(\Az_{\infty})$ via affine transformations as in \S~\ref{review}. This proposition is needed in the proof of Theorem~\ref{thm1}; more precisely, it is needed in the proofs of Corollary~\ref{K-subalg'} and Proposition~\ref{betac=id}. We have postponed the proof of this proposition until now because it is independent from the previous sections.

\subsection{Induction and restriction }
\label{sec:Induction_and_restriction}

In this section let $G$ be a discrete group and let $A$ be a $G$-C*-algebra, i.e., a C*-algebra $A$ with left $G$-action. We write $g \bullet a$ for this action. We write elements in $C_c(G,A)$ as finite sums of the form $\sum_g a_g \cdot g$. Let us present some elementary facts about induction and restriction which hold for reduced as well as full crossed products. But as we will consider amenable groups anyway later on, we only treat the case of reduced crossed products and remark that full crossed products can be studied in a similar way.

Let $\iota: H \into G$ be an injective group homomorphism.  The homomorphism $\id_A \rtimes_r \, \iota: A \rtimes_r H \to A \rtimes_r G$ induces the map called \emph{induction with $\iota$}, 
\bgl
  \iota_* = \ind_\iota: K_i(A \rtimes_r H) \to K_i(A \rtimes_r G).
  \label{induction_with_iota_twisted_r}
\egl

Now suppose that the index of the image of $\iota$ in $G$ is finite. We want to construct maps in the \an{wrong} direction, i.e., a map $K_i(A \rtimes_r G) \to K_i(A \rtimes_r H)$. To simplify notations, we think of $H$ as a subgroup of $G$ via $\iota$. On $K_0$, we proceed as follows:

We obtain an isomorphism of (left) $A \rtimes_r H$-modules
\bgl
  \bigoplus_{\gamma H \in G/H} A \rtimes_r H \xrightarrow{\cong} \res_{A \rtimes_r G}^{A \rtimes_r H} (A \rtimes_r G)
  \label{restriction_A-rtimes_G_to_A-rtimes_H_r}
\egl
sending $(x_{\gamma H})_{\gamma H}$ to $\sum_{\gamma H \in G/H} x_{\gamma H} \cdot \gamma^{-1}$ after choices of representatives $\gamma \in \gamma H$ for every $\gamma H \in G/H$. Hence $A \rtimes_r G$ is a finitely generated free $A \rtimes_r H$-module. This implies that the restriction of every finite generated projective $A \rtimes_r G$-module to $A \rtimes_r H$ is again a finitely generated projective $A \rtimes_r H$-module. Hence we obtain a homomorphism $\iota^* = \res_\iota: K_0(A \rtimes_r G) \to K_0(A \rtimes_r H)$ which is called \emph{restriction with $\iota$}.

Here is an alternative construction which has the advantage that it works for $K_1$ as well: First of all, we represent $A$ faithfully on a Hilbert space $\cH$. Then $A \rtimes_r G$ is faithfully represented on $\cH \otimes \ell^2(G)$ via $(a \cdot g)(\xi \otimes \varepsilon_\gamma) = (((g \gamma)^{-1} \bullet a) \xi) \otimes \varepsilon_{g \gamma}$. We identify $A \rtimes_r G$ with concrete operators on $\cH \otimes \ell^2(G)$ via this representation. Now fix representatives $\gamma \in \gamma H$ for every $\gamma H \in G/H$. From the (set-theoretical) bijection $G = \dotcup_{\gamma} \gamma H \cong \dotcup_{\gamma} H$ we obtain a unitary
\bglnoz
  \cH \otimes \ell^2(G) &\cong& \bigoplus_{G/H} \cH \otimes \ell^2(H) \\
  \sum_{\gamma} \sum_h \lambda_{\gamma h} \xi_{\gamma h} \otimes \varepsilon_{\gamma h} 
  &\ma& \rukl{\sum_h \lambda_{\gamma h} \xi_{\gamma h} \otimes \varepsilon_h}_\gamma.
\eglnoz
Conjugation by this unitary yields the identification
\bgloz
  \cL(\cH \otimes \ell^2(G)) \cong M_{[G:H]}(\cL(\cH \otimes \ell^2(H))), \ T \ma \rukl{P_{\ell^2(H)} \gamma^{-1} T \gamma' P_{\ell^2(H)}}_{\gamma,\gamma'}
\egloz
where $P_{\ell^2(H)}$ is the orthogonal projection onto the subspace $\ell^2(H)$ of $\ell^2(G)$.

A straightforward computation shows that this isomorphism sends the operator $a \cdot g$ to the matrix whose $(\gamma,\gamma')$-th entry is $(\gamma^{-1} \bullet a) \cdot (\gamma^{-1} g \gamma')$ if $\gamma^{-1} g \gamma'$ lies in $H$ and $0$ if $\gamma^{-1} g \gamma' \notin H$. In particular, $A \rtimes_r G$ is mapped to $M_{[G:H]}(A \rtimes_r H)$. This homomorphism induces the desired map $\iota^*: K_i(A \rtimes_r G) \to K_i(M_{[G:H]}(A \rtimes_r H)) \cong K_i(A \rtimes_r H)$.

Moreover, these restriction maps do not depend on the choices of the representatives $\gamma$ in $\gamma H \in G/H$. The reason is that for two different choices, the constructed homomorphisms $A \rtimes_r G \to M_{[G:H]}(A \rtimes_r H)$ turn out to be unitarily equivalent, hence they induce the same map in K-theory.

If $\iota: H \to G$ is an inclusion of subgroups, one often writes $\res_\iota = \res_G^H$ and $\ind_\iota = \ind_H^G$.  For $g \in G$ conjugation defines an isomorphism of C*-algebras $c(g) :  A \rtimes G \to A \rtimes G$ sending $x \in A \rtimes G$ to $gxg^{-1}$. The two endomorphisms $\ind_{c(g)}$ and $\res_{c(g)}$ of $K_0(A \rtimes G)$ are the identity. Hence in the next lemma the choice of representatives $\gamma \in H \gamma K$ for an element $H \gamma K \in H \backslash G/K$ does not matter. It is a variation of the classical Double Coset Formula.

\begin{lemma} \label{lem:Double_Coset_Formula} Let $H, K \subseteq A$ be two subgroups of $G$. Suppose that $H$ has finite index in $G$. Then, for $i=0,1$, we get the following equality of homomorphisms $K_i(A \rtimes_r K) \to K_i(A \rtimes_r G)$:
$$\res_G^H \circ \ind_K^G  = \sum_{H \gamma K \in H \backslash G/K} \ind_{c(\gamma): K \cap \gamma^{-1} H \gamma \to H} \circ \res_K^{K \cap \gamma^{-1} H \gamma},$$
where $c(\gamma)$ is conjugation with $\gamma$, i.e., $c(\gamma)(k) = \gamma k \gamma^{-1}$.
\end{lemma}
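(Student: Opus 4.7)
The plan is to prove the equality at the level of $*$-homomorphisms, using the explicit matrix model of restriction recalled earlier in the excerpt. Concretely, I will exhibit both sides as K-theory maps induced by $*$-homomorphisms $A \rtimes_r K \to M_{[G:H]}(A \rtimes_r H)$ that agree literally after making compatible choices of coset representatives. Because the argument takes place at the level of $*$-homomorphisms rather than K-theory classes, it handles $K_0$ and $K_1$ uniformly.

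Choose representatives $\{\gamma\} \subseteq G$ for $H \backslash G / K$; for each $\gamma$, set $L_\gamma \defeq K \cap \gamma^{-1} H \gamma$ and pick representatives $\{k^\gamma_i\}_i$ for $K / L_\gamma$. The key combinatorial observation is that $\{k^\gamma_i \gamma^{-1}\}_{\gamma, i}$ is then a complete system of representatives for $G/H$: the group $K$ acts on $G/H$ by left multiplication, its orbits are in bijection with $K \backslash G / H \cong H \backslash G / K$ (the latter isomorphism by inversion), and the orbit through $\gamma^{-1} H$ is parametrised by $K / L_\gamma$ via $k L_\gamma \mapsto k \gamma^{-1} H$.

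With these representatives in place, the matrix formula from the excerpt realises the composition $A \rtimes_r K \hookrightarrow A \rtimes_r G \to M_{[G:H]}(A \rtimes_r H)$ as a block-diagonal $*$-homomorphism indexed by $\gamma$: outside these blocks the matrix entries vanish, because left multiplication by any element of $K$ preserves $K$-orbits on $G/H$. A routine entrywise computation identifies the $\gamma$-block with the composition
\[
A \rtimes_r K \xrightarrow{\,\res_K^{L_\gamma}\,} M_{[K:L_\gamma]}(A \rtimes_r L_\gamma) \xrightarrow{\,c(\gamma)\,} M_{[K:L_\gamma]}(A \rtimes_r \gamma L_\gamma \gamma^{-1}) \hookrightarrow M_{[K:L_\gamma]}(A \rtimes_r H),
\]
where $c(\gamma)$ denotes the inner automorphism of $A \rtimes_r G$ (conjugation by $\gamma$) applied entrywise and restricted to $A \rtimes_r L_\gamma$, whose image lies in $A \rtimes_r (\gamma L_\gamma \gamma^{-1}) = A \rtimes_r (\gamma K \gamma^{-1} \cap H) \subseteq A \rtimes_r H$. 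Passing to K-theory, this composition realises $\ind_{c(\gamma)} \circ \res_K^{L_\gamma}$ in the notation of the lemma, because the inner automorphism $c(\gamma)$ of $A \rtimes_r G$ acts as the identity on $K_*(A \rtimes_r G)$. Summing over the $\gamma$-blocks yields the claimed identity.

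No serious mathematical obstacle is anticipated: this is the classical Double Coset Formula translated into the explicit matrix model of restriction. The subtlety that requires care is the conjugation twist that $c(\gamma)$ applies to the coefficient algebra $A$ (namely $a \mapsto \gamma \bullet a$), which must be tracked when comparing matrix entries, together with the correct interpretation of $\ind_{c(\gamma)}$ as the K-theory map of \emph{conjugation by $\gamma$ in $A \rtimes_r G$ followed by inclusion into $A \rtimes_r H$}. Independence of the choice of double coset representatives is automatic from the same triviality of inner automorphisms and has already been justified in the paragraph preceding the lemma statement.
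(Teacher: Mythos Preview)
Your proposal is correct and follows essentially the same approach as the paper's proof: choose double coset representatives $\gamma$ and then representatives $\kappa_\gamma$ for $K/(K\cap\gamma^{-1}H\gamma)$, observe that the products $\kappa_\gamma\gamma^{-1}$ represent $G/H$, and check that the resulting matrix model of $\res_G^H\circ\ind_K^G$ is block-diagonal with the $\gamma$-block equal to $c(\gamma)$ applied entrywise to the matrix realising $\res_K^{K\cap\gamma^{-1}H\gamma}$. The paper additionally sketches a purely module-theoretic argument for $K_0$ before giving this matrix argument, but the latter is the one that covers both $K_0$ and $K_1$ and is exactly what you outline.
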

\begin{proof}
Since $H$ has finite index in $G$, $K \cap \gamma^{-1} H \gamma$ has finite index in $K$ and $H \backslash G/K$ is finite. Hence the expression appearing in Lemma~\ref{lem:Double_Coset_Formula} makes sense.

On $K_0$, we can proceed as follows: Let $P$ be a finitely generated projective $A \rtimes_r K$-module. Fix choices of representatives $\gamma \in H \gamma K$ for every $H \gamma K \in H \backslash G/K$. Next we claim that the following homomorphism
  \begin{multline*}
    \bigoplus_{H \gamma K \in H \backslash G/K} \ind_{c(\gamma)}
    \rukl{\res_{K}^{K \cap \gamma^{-1} H \gamma} P} 
    = \bigoplus_{H \gamma K \in H\backslash G/K} A \rtimes_r H \otimes_{A \rtimes_r (K \cap \gamma^{-1} H \gamma)} P
    \\
    \xrightarrow{\cong} \res_G^H \circ \ind_K^G P = A \rtimes_r G \otimes_{A \rtimes_r K} P.
  \end{multline*}
is an isomorphism of $A \rtimes_r K$-modules. Its restriction to the summand for $H \gamma K \in H \backslash G/K$ sends $x \otimes p$ for $x \in A \rtimes_r K$ and $p \in P$ to $x \gamma \otimes p$. Since it is natural and compatible with direct sums, it suffices to show bijectivity for $P = A \rtimes_r H$ what is straightforward.

Again, we present an alternative proof which works in general (i.e., for $i=1$ as well). Choose representatives $\gamma \in H \gamma K$ for every $H \gamma K \in H \backslash G/K$. For every such $\gamma$, choose representatives $\kappa_\gamma \in \kappa_\gamma (K \cap \gamma^{-1} H \gamma)$ for every $\kappa_\gamma (K \cap \gamma^{-1} H \gamma) \in K/(K \cap \gamma^{-1} H \gamma)$. The first observation is that the products $\kappa_\gamma \gamma^{-1}$ form a full set of representatives for $G/H$, i.e., we can write $G$ as a disjoint union as follows:
\bgloz
  G = \bigcup_\gamma \bigcup_{\kappa_\gamma} (\kappa_\gamma \gamma^{-1}) H.
\egloz
Now we use the representatives $\gekl{\kappa_\gamma \gamma^{-1}}$ of $G/H$ to construct as above the homomorphism $A \rtimes_r G \to M_{[G:H]}(A \rtimes_r H)$ which induces $\res_H^G$ on K-theory. The composition of this map with the canonical map $A \rtimes_r K \to A \rtimes_r G$ is given by
\bgloz
  C_c(K,A) \ni a \cdot k \ma (x_{\kappa_\gamma \gamma^{-1},\kappa'_{\gamma'} \gamma'^{-1}}) \in M_{[G:H]}(A \rtimes_r H)
\egloz
with
\bgloz
  x_{\kappa_\gamma \gamma^{-1},\kappa_{\gamma'} \gamma'^{-1}} = 
  \bfa
  ((\gamma \kappa_\gamma^{-1}) \bullet a) \cdot (\gamma \kappa_\gamma^{-1} k \kappa'_{\gamma'} \gamma'^{-1}) 
  \falls \gamma \kappa_\gamma^{-1} k \kappa_{\gamma'} \gamma'^{-1} \in H, \\
  0 \sonst.
  \efa
\egloz
The second observation is that the matrix $(x_{\kappa_\gamma \gamma^{-1},\kappa'_{\gamma'} \gamma'^{-1}})$ can be decomposed into smaller matrix blocks since for $\gamma \neq \gamma'$, $\gamma \kappa_\gamma^{-1} k \kappa_{\gamma'} \gamma'^{-1}$ does not lie in $H$ no matter which $k \in K$ we take. This holds because $\gamma \neq \gamma'$ implies $(H \gamma K) \cap (H \gamma' K) = \emptyset$ by our choice of the $\gamma$s. Hence in K-theory, we obtain that the class of $(x_{\kappa_\gamma \gamma^{-1},\kappa'_{\gamma'} \gamma'^{-1}})$ is the sum over $\gamma$ of the classes of $(x_{\kappa_\gamma \gamma^{-1},\kappa'_\gamma \gamma^{-1}})_{\kappa_\gamma,\kappa'_\gamma}$. 

Now the third observation is that
\bglnoz
  x_{\kappa_\gamma \gamma^{-1},\kappa'_\gamma \gamma^{-1}} 
  &=& 
  \bfa
  ((\gamma \kappa_\gamma^{-1}) \bullet a) \cdot (\gamma \kappa_\gamma^{-1} k \kappa'_\gamma \gamma^{-1}) 
  \falls \gamma \kappa_\gamma^{-1} k \kappa_\gamma \gamma^{-1} \in H \\
  0 \sonst
  \efa \\
  &=& 
  \bfa
  c(\gamma)((\kappa_\gamma^{-1} \bullet a) \cdot (\kappa_\gamma^{-1} k \kappa'_\gamma))
  \falls \kappa_\gamma^{-1} k \kappa_\gamma \in K \cap \gamma^{-1} H \gamma, \\
  0 \sonst.
  \efa
\eglnoz
This means that the map $a \cdot k \ma  (x_{\kappa_\gamma \gamma^{-1},\kappa_\gamma \gamma^{-1}})_{\kappa_\gamma,\kappa'_\gamma}$ is precisely the composition with $c(\gamma)$ (or rather the extension of $c(\gamma)$ to matrices) of one of the maps $A \rtimes_r K \to M_{[K:K \cap \gamma^{-1} H \gamma]}(A \rtimes_r (K \cap \gamma^{-1} H \gamma))$ which induce $\res_K^{K \cap \gamma^{-1} H \gamma}$. This proves the Double Coset Formula.
\end{proof}

Let $\iota: H \to G$ be the inclusion of a normal subgroup of finite index. Denote by $N_{G/H} \in \Zz[G/H]$ the norm element, i.e., $N_{G/H} = \sum_{\gamma H} \gamma H$.  If $M$ is any $\Zz[G/H]$-module, then multiplication with $N_{G/H}$ induces a map $\Zz \otimes_{\Zz[G/H]} M \to M^{G/H}$ whose kernel and whose cokernel are annihilated by multiplication with $[G:H]$. Denote by $c(g): A \rtimes_r H \to A \rtimes_r H$ and by $c(g): A \rtimes_r G \to A \rtimes_r G$ the ring homomorphisms obtained by conjugation with $g$, i.e., they send $x$ to $gxg^{-1}$. The induction homomorphism $\ind_{c(g)}: K_i(A \rtimes_r G) \to K_i(A \rtimes_r G)$ is the identity. The induction homomorphism $\ind_{c(g)}: K_i(A \rtimes_r H) \to K_i(A \rtimes_r H)$ is the identity provided that $g \in H$. Since $c(g_1) \circ c(g_2) = c(g_1g_2)$ holds for $g_1,g_2 \in G$ and $c(1) = \id$, we obtain a $G/H$-action on $K_i(A \rtimes_r H)$. The group homomorphisms $c(g) \circ i$ and $i \circ c(g)$ agree. Hence the map $\iota_* = \ind_\iota: K_i(A \rtimes_r H) \to K_i(A \rtimes_r G)$ factors over the canonical projection $K_i(A \rtimes_r H) \onto \Zz \otimes_{\Zz[G/H]} K_i(A \rtimes_r H)$ to a homomorphism $\overline{\iota_*}: \Zz \otimes_{\Zz[G/H]} K_i(A \rtimes_r H) \to K_i(A \rtimes_r G)$. In addition, the homomorphism $A \rtimes_r G \to M_{[G:H]}(A \rtimes_r H)$ which induces $\res_{\iota}$ commutes with $c(g)$ (the extended map on matrices) up to unitary equivalence, and therefore the map $\iota^* = \res_\iota: K_i(A \rtimes_r G) \to K_i(A \rtimes_r H)$ factors over the inclusion $K_i(A \rtimes_r H)^{G/H} \into K_i(A \rtimes_r H)$ to a map $\overline{\iota^*}: K_i(A \rtimes_r G) \to K_i(A \rtimes_r H)^{G/H}$. These considerations and Lemma~\ref{lem:Double_Coset_Formula} imply

\begin{lemma}\label{lem_double_coset_for_H_is_K_subset_G-normal_fin_index}
  Let $\iota: H \to G$ be the inclusion of a normal subgroup of finite index.  Then we obtain a commutative diagram such that the kernel and cokernel of the lower horizontal map are annihilated by $[G:H]$.
$$\xymatrix@C=1mm{
  K_i(A \rtimes_r H) \ar[rr]^{\iota_*}  \ar[dr] 
  & & K_i(A \rtimes_r G) \ar[rr]^{\iota^*} \ar[dr]^{\overline{\iota^*}} & &
  K_i(A \rtimes_r H)
  \\
  & \Zz \underset{\Zz[G/H]}{\otimes} K_i(A \rtimes_r H) \ar[rr]^{N_{G/H}}
  \ar[ur]^{\overline{\iota_*}} & & K_i(A \rtimes_r H)^{G/H} \ar[ur] & }
$$
\end{lemma}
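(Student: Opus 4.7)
The plan is to derive both commutativity and the kernel/cokernel estimate from the Double Coset Formula (Lemma~\ref{lem:Double_Coset_Formula}) together with a purely module-theoretic fact about norm maps.

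First, I would specialize the Double Coset Formula to the present setting by taking $K = H$. Normality of $H$ in $G$ has two consequences: the set of double cosets $H \backslash G/H$ coincides with the set of left cosets $G/H$, and for every $\gamma \in G$ we have $H \cap \gamma^{-1} H \gamma = H$, so that $\res_H^{H \cap \gamma^{-1} H \gamma} = \id$. Moreover $c(\gamma): H \to H$ is an inner automorphism when $\gamma \in H$, hence its induced self-map of $K_i(A \rtimes_r H)$ depends only on the coset $\gamma H \in G/H$ and coincides with the $G/H$-action used to define $\Zz \otimes_{\Zz[G/H]} K_i(A \rtimes_r H)$ and $K_i(A \rtimes_r H)^{G/H}$. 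Consequently the Double Coset Formula collapses to the identity
\[
  \iota^* \circ \iota_* \;=\; \sum_{\gamma H \in G/H} \ind_{c(\gamma)} \;=\; N_{G/H}
\]
of endomorphisms of $K_i(A \rtimes_r H)$, where on the right the norm element acts via the $G/H$-action.

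Next I would verify commutativity of the individual triangles and squares. The two outer triangles are already settled by the paragraph preceding the lemma, where $\overline{\iota_*}$ and $\overline{\iota^*}$ are defined precisely so that $\iota_*$ factors as (projection)$\,\circ\,\overline{\iota_*}$ and $\iota^*$ factors as (inclusion)$\,\circ\,\overline{\iota^*}$. For the central region, one chases an element $x \in K_i(A \rtimes_r H)$ around: its class $\bar x \in \Zz \otimes_{\Zz[G/H]} K_i(A \rtimes_r H)$ is sent by $\overline{\iota^*} \circ \overline{\iota_*}$ to $\iota^*(\iota_*(x)) = N_{G/H}(x) \in K_i(A \rtimes_r H)^{G/H}$, which is by definition the image of $\bar x$ under the norm map. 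This identifies the bottom horizontal arrow with $\overline{\iota^*} \circ \overline{\iota_*}$ and completes the commutativity check.

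Finally, the statement that $\ker$ and $\coker$ of
\[
  \tilde N_{G/H}: \Zz \otimes_{\Zz[G/H]} M \longrightarrow M^{G/H}
\]
are annihilated by $[G:H]$ for any $\Zz[G/H]$-module $M$ is standard. For the cokernel one observes $\tilde N_{G/H}(\bar m) = [G:H] \cdot m$ for any $m \in M^{G/H}$. For the kernel, if $\tilde N_{G/H}(\bar x) = 0$ in $M^{G/H} \subseteq M$ then $N_{G/H}(x) = 0$ in $M$, and writing
\[
  [G:H] \cdot x \;=\; N_{G/H}(x) \;-\; \sum_{\gamma H \in G/H} (\gamma - 1)\, x
\]
shows that $[G:H] \cdot \bar x = 0$ in $\Zz \otimes_{\Zz[G/H]} M$. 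Applying this to $M = K_i(A \rtimes_r H)$ concludes the proof. There is no genuine obstacle; the essential point is simply the collapse of the Double Coset Formula under normality, and the rest is formal.
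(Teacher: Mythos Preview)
Your proposal is correct and follows the same approach as the paper: the paper's one-line proof (``These considerations and Lemma~\ref{lem:Double_Coset_Formula} imply'') refers precisely to the collapse of the Double Coset Formula under normality to $\iota^*\circ\iota_* = N_{G/H}$, together with the factorizations of $\iota_*$ and $\iota^*$ and the standard module-theoretic fact about the norm map, all of which were set up in the paragraph preceding the lemma. You have simply spelled out those details explicitly; one minor slip is the phrase ``$\iota_*$ factors as (projection)\,$\circ$\,$\overline{\iota_*}$'', which should read $\overline{\iota_*}\circ\text{(projection)}$, but the intent is clear.
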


Moreover, we have:
\bcor \label{cor:A_rtimes_finite} Let $F$ be a finite group and $A$ be a $F$-C*-algebra. Then the inclusion induces a map $K_i(A) \to K_i(A \rtimes F)$ that factors over the canonical projection $K_i(A) \to \Zz \otimes_{\Zz F} K_i(A)$ to a map $\Zz \otimes_{\Zz F} K_i(A) \to K_i(A \rtimes F)$ whose kernel is annihilated by multiplication with $\abs{F}$.
\ecor
\begin{proof}
This follows directly from Lemma~\ref{lem_double_coset_for_H_is_K_subset_G-normal_fin_index} applied to $H = \gekl{1}$ and $G=F$. Of course, since $F$ is finite, we do not need to distinguish between reduced and full crossed products.
\end{proof}

\subsection{Injectivity after inverting orders}

Now we consider the case $G = R \rtimes \mu$. Actually, we can treat a slightly more general situation, namely that $G = \Zz^n \rtimes F$ for a finite cyclic group $F$ where the conjugation action of $F$ on $\Zz^n$ is free outside the origin $0 \in \Zz^n$. Let $A$ be a $G$-C*-algebra. Since $G$ is amenable, we do not have to distinguish between full and reduced crossed products. It is clear that $A \rtimes G \cong (A \rtimes \Zz^n) \rtimes F$ where $\Zz^n$ acts on $A$ via the restricted action and $F$ acts on $A \rtimes \Zz^n$ via $c(f)$, i.e., $f \bullet (a \cdot g) = (f \bullet a) \cdot (fgf^{-1})$ for $a \in A$ and $g \in \Zz^n$.

\begin{theorem} \label{the:injectivity_for_F_to_Zn_rtimes_F}
Suppose that the map $K_i(A) \to K_i(A\rtimes \Zz^n)$ is injective after inverting $\abs{F}$. Then the map $K_i(A \rtimes F) \to K_i(A \rtimes G)$ induced by the canonical homomorphism $A\rtimes F \to A \rtimes G$ is injective after inverting $\abs{F}$.
\end{theorem}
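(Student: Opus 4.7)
The plan is to invoke the Baum-Connes conjecture (valid for the amenable group $G = \Zz^n \rtimes F$) to reinterpret both sides as $G$-equivariant $K$-homology of the universal proper $G$-space $\underline{E}G = \Rz^n$, with its affine $G$-action. Under this identification, $K_i(A \rtimes G) \cong KK_i^G(C_0(\Rz^n), A)$, and since $\underline{E}F$ is a point, $K_i(A \rtimes F)$ sits as $KK_i^G(C_0(G/F), A) \cong KK_i^G(C_0(\Zz^n), A)$, where $\Zz^n = G\cdot\{0\}$ is the $G$-orbit of the origin. The map $\iota_*$ of the theorem corresponds to the pushforward along the closed $G$-equivariant inclusion $\Zz^n \hookrightarrow \Rz^n$.

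The task thus reduces to showing that the connecting map in the six-term sequence associated with the $G$-equivariant extension $0 \to C_0(U) \to C_0(\Rz^n) \to C_0(\Zz^n) \to 0$, where $U := \Rz^n \setminus \Zz^n$, vanishes after inverting $|F|$ on the image of $KK^G_*(C_0(\Zz^n), A)$. The crucial observation is that, because $F$ acts freely on $\Zz^n \setminus \{0\}$, no point of $U$ has stabilizer of the maximal order $|F|$: the maximal-order stabilizers occur only at points of $\Zz^n$. An iterated Pimsner-Voiculescu computation for the translation action of $\Zz^n$, together with a $F$-equivariant cellular decomposition of $U$, expresses $KK^G_*(C_0(U), A)$ in terms of $K_*(A \rtimes \Zz^n)$ and groups associated with proper subgroups of $F$ (which contribute components already of $|F|$-torsion type or controlled by smaller-order data).

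The hypothesis, which asserts that the $F$-equivariant inclusion $A \hookrightarrow A \rtimes \Zz^n$ is injective on $K$-theory after inverting $|F|$, can then be fed into this description by naturality of the Pimsner-Voiculescu sequence and of the six-term exact sequence attached to $(\Rz^n, \Zz^n)$. The main obstacle is precisely this equivariant Pimsner-Voiculescu analysis: one must carry out the decomposition of $U$ into $F$-stable cells, track the residual $F$-action at each step, and exploit the freeness of the action on $\Zz^n \setminus \{0\}$ to argue that the contributions from proper subgroups of $F$ are automatically controlled once $|F|$ is inverted. Once the connecting map is shown to annihilate the image of $KK^G_*(C_0(\Zz^n), A)$, exactness yields the injectivity of $\iota_*$ after inverting $|F|$, as required.
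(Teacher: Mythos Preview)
Your Baum--Connes setup is correct: identifying $K_i(A\rtimes F)\to K_i(A\rtimes G)$ with the map $K_i^G(G/F;A)\to K_i^G(\underline{E}G;A)$ induced by the orbit inclusion $G/F\cong \Zz^n\hookrightarrow \Rz^n$ is exactly how the paper proceeds as well. The gap is in the next step. Your ``crucial observation'' that no point of $U=\Rz^n\setminus\Zz^n$ has $G$-stabilizer of order $\abs{F}$ is false. Take $G=\Zz^2\rtimes\Zz/4$ with $\Zz/4$ acting by rotation; then $F$ acts freely on $\Zz^2\setminus\{0\}$, yet $x=(\tfrac12,\tfrac12)\in U$ has $G$-stabilizer $\langle((1,0),f)\rangle$ of order $4=\abs{F}$. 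In general the fixed points of maximal finite subgroups of $G$ form several $\Zz^n$-cosets in $\Rz^n$, one for each conjugacy class in $\cM$; the orbit $\Zz^n$ of the origin accounts only for the class of $F$ itself. So the pair $(\Rz^n,\Zz^n)$ does not separate the ``maximal-isotropy'' locus from the rest, and your proposed analysis of $KK^G_*(C_0(U),A)$ via proper subgroups of $F$ cannot go through as stated. The subsequent paragraph (``iterated Pimsner--Voiculescu \dots\ $F$-equivariant cellular decomposition'') is also only a wish list; you yourself flag it as ``the main obstacle'', and no argument is given.

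The paper avoids this by enlarging the closed subspace to include \emph{all} of $\cM$: it uses the $G$-pushout
\[
\xymatrix{
\coprod_{M\in\cM} G\times_M EM \ar[r]\ar[d] & EG \ar[d]\\
\coprod_{M\in\cM} G/M \ar[r] & \underline{E}G
}
\]
so that the complement really is free. The resulting Mayer--Vietoris sequence, together with a spectral sequence/edge-homomorphism argument, shows (Lemma~\ref{lem:A_rtimesG_for_Zn_to_G_to_F}) that after inverting $\abs{F}$ the kernel of $K_i(A\rtimes F)\to K_i(A\rtimes G)$ is identified with the kernel of $\Zz\otimes_{\Zz F}K_i(A)\to K_i(A\rtimes G)$. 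The hypothesis is then fed in via the elementary norm-element trick $N_F\colon \Zz\otimes_{\Zz F}K_i(A)\to K_i(A)^F$ (Lemma~\ref{lem:injectivity_for_F_to_Zn_rtimes_F_otimes}) and the double-coset/restriction formula (Corollary~\ref{cor:A_rtimes_finite}) applied to $A\rtimes\Zz^n$ and the finite group $F$. If you want to repair your argument along geometric lines, replace $\Zz^n$ by the full set of maximal-isotropy points $\bigcup_{M\in\cM} (\Rz^n)^M$, and you will be led to the same Mayer--Vietoris sequence the paper uses.
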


The proof of this theorem needs some preparation.

\begin{lemma} \label{lem:A_rtimesG_for_Zn_to_G_to_F} 
Let $\cM$ be a complete system of representatives of conjugacy class of maximal finite subgroups of $G$.  Let $m$ be the least common multiple of the orders of the subgroups $M \in \cM$.

Then for every $M \in \cM$ the canonical homomorphisms $A \to A \rtimes M \to A \rtimes G$ induce a map 
\bgl
\label{bij?}
  \ker\bigl(\Zz \otimes_{\Zz M} K_i(A) \to K_i(A\rtimes G)\bigr) \to \ker\bigl(K_i(A \rtimes M) \to K_i(A\rtimes G)\bigr)
\egl
which is bijective after inverting $m$.
\end{lemma}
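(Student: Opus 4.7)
My plan is to analyze the two-stage composition
\[ \Zz \otimes_{\Zz M} K_i(A) \xrightarrow{\alpha} K_i(A \rtimes M) \xrightarrow{\beta} K_i(A \rtimes G) \]
and treat the induced map $\ker(\beta \circ \alpha) \to \ker(\beta)$ in two steps. Injectivity after inverting $m$ is immediate from Corollary~\ref{cor:A_rtimes_finite}: any $x \in \ker(\beta \circ \alpha)$ that lies in $\ker(\alpha)$ is annihilated by $|M|$, and since $|M|$ divides $m$ we have $x = 0$ after inverting $m$, so the map on kernels has trivial kernel once $m$ is invertible.

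For surjectivity after inverting $m$, I need the inclusion $\ker(\beta) \subseteq \img(\alpha)$ in $K_i(A \rtimes M)[1/m]$. The plan is a character decomposition. Since $M$ is cyclic and $|M|$ is invertible in $\Zz[1/m]$, the dual action of $\widehat M$ on $A \rtimes M$ produces an isotypic splitting $K_i(A \rtimes M)[1/m] = \bigoplus_{\chi \in \widehat M} K_i(A \rtimes M)[1/m]_\chi$ under which the image of $\alpha[1/m]$ is exactly the trivial-character summand (this matches the Takai-duality picture, where the $\widehat M$-fixed part of $K_i(A \rtimes M)$ is essentially $K_i(A)^M$). Thus surjectivity reduces to showing that, for every non-trivial $\chi \in \widehat M$, the $\chi$-summand of $K_i(A \rtimes M)[1/m]$ maps injectively into $K_i(A \rtimes G)[1/m]$. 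For $A = \Cz$ this is precisely assertion ($**$) of Theorem~\ref{LL} for the maximal finite subgroup $M$; the content of the lemma is the generalization to an arbitrary coefficient algebra $A$.

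To obtain this coefficient version I would invoke the Baum-Connes formalism. Since $G = \Zz^n \rtimes F$ is amenable, the assembly map $K^G_*(\underline{E}G; A) \to K_*(A \rtimes G)$ is an isomorphism, and $\underline{E}G = \Rz^n$ is a convenient model. Because $F$ acts freely on $\Zz^n \setminus \gekl 0$, each conjugacy class of maximal finite subgroups of $G$ has (up to translation) a unique fixed point in $\Rz^n$, and the Bredon-type decomposition of $K^G_*(\Rz^n; A)[1/m]$ isolates the $M$-contribution as a direct summand that, under the assembly isomorphism, detects precisely the non-trivial character summands of $K_i(A \rtimes M)[1/m]$; this yields the required injectivity. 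The anticipated hard part is a rigorous execution for non-commutative $A$: Theorem~\ref{LL} handles $A = \Cz$, but pushing through the equivariant Atiyah-Hirzebruch spectral sequence for the Bredon coefficient system $H \mapsto K_*(A \rtimes H)$, and verifying that after inverting $m$ only the maximal-finite-subgroup contributions survive, is the main technical work.
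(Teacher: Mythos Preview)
Your injectivity argument is correct and matches the paper's. For surjectivity, however, the paper takes a more direct route that bypasses the dual-action character decomposition entirely. It starts from a $G$-pushout expressing $\underline{E}G$ in terms of $EG$ and the orbits $G\times_M\pt$ for $M\in\cM$ (such a pushout exists precisely because of the structure of the maximal finite subgroups of $G=\Zz^n\rtimes F$), and applies equivariant $K$-homology with coefficients in $A$ to obtain a Mayer--Vietoris long exact sequence. An Atiyah--Hirzebruch-type spectral sequence shows that the edge map $\Zz\otimes_{\Zz M}K_i(A)\to K_i^G(G\times_M EM;A)$ is an isomorphism after inverting $\lvert M\rvert$; combined with the injectivity from Corollary~\ref{cor:A_rtimes_finite} and the Baum--Connes isomorphism $K_i^G(\underline{E}G;A)\cong K_i(A\rtimes G)$, the long exact sequence collapses after inverting $m$ to
\begin{multline*}
0 \to \bigoplus_{M\in\cM} \Zz\otimes_{\Zz M}K_i(A)\bigl[\tfrac1m\bigr] \to K_i^G(EG;A)\bigl[\tfrac1m\bigr]\oplus\bigoplus_{M\in\cM} K_i(A\rtimes M)\bigl[\tfrac1m\bigr] \\
\to K_i(A\rtimes G)\bigl[\tfrac1m\bigr]\to 0,
\end{multline*}
from which surjectivity of the map on kernels is immediate.

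Your route is not wrong in spirit, but it is more roundabout. The assertion that $\img(\alpha)[1/m]$ coincides with the trivial-character summand is true but is not free: it follows, for instance, from the identity $\ind_{\{1\}}^M\circ\res^M_{\{1\}}=\sum_{\chi\in\widehat M}\chi_*$ on $K_i(A\rtimes M)$, which itself needs an argument you have not supplied. More importantly, your final step---showing that the non-trivial character summands of $K_i(A\rtimes M)[1/m]$ inject into $K_i(A\rtimes G)[1/m]$ via a Bredon-type decomposition of $K^G_*(\Rz^n;A)[1/m]$---is only sketched, and once made precise it becomes exactly the pushout/Mayer--Vietoris computation above. So the character decomposition adds an intermediate claim to verify without simplifying the substantive part of the argument; the paper's approach gets both injectivity and surjectivity out of the same short exact sequence in one stroke.
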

\begin{proof}
We have the following $G$-pushout (compare \cite[(4.5)]{La-Lü})
\begin{eqnarray*}
\xymatrix@C=15mm{
\coprod_{M \in \cM} G \times_{M}  EM 
\ar[r] 
\ar[d]_{\coprod_{M \in \cM} \id_G \times_{M} f_M}
& EG \ar[d]^f
\\
\coprod_{M \in \cM} G \times_M \pt
\ar[r]
& \eub{G}
}
\end{eqnarray*}
Applying equivariant K-homology with coefficients in $A$, we obtain a long exact sequence (the Mayer-Vietoris sequence associated with the pushout)
\begin{multline}
\cdots \to \bigoplus_{M \in \cM} K_i^G(G \times_M EM;A) \to K_i^G(EG;A) \oplus \bigoplus_{M \in \cM} K_i^G(G \times_M \pt;A) \\
\to K_i^G(\eub{G};A) \to \bigoplus_{M \in \cM} K_{i-1}^G(G \times_M EM;A) \\
\to K_{i-1}^G(EG;A) \oplus \bigoplus_{M \in \cM} K_{i-1}^G(G \times_M \pt;A) \to K_{i-1}^G(\eub{G};A) \to \cdots 
\label{long_exact_sequence_for_KG(Artimes_G)}
\end{multline}
There is a spectral sequence converging to $K_i^G(G \times_M EM;A)$
whose $E^2$-term is $H_p^G(G \times_M EM;K_q(A))$. Since $M$ is
finite, we know that $H_p^G(G \times_M EM;K_q(A)) \cong H_p^M(EM;K_q(A))$ is
annihilated by multiplication with $\abs{M}$ for $p \ge 1$.  Since 
$H_0^G(G \times_M EM;K_i(A))$ can be identified with $\Zz \otimes_{\Zz M} K_i(A)$, the
edge homomorphism
$$\Zz  \otimes_{\Zz M} K_i(A) \to  K_i^G(G \times_M EM;A)$$
is bijective after inverting $\abs{M}$. Its composite with
$$K_i^G(\id_G \times_M f_M): K_i^G(G \times_M EM;A) 
\to K_i^G(G \times_M \pt;A) \cong K_i(A \rtimes M)$$ 
is the map
$\Zz  \otimes_{\Zz M} K_i(A) \to K_i(A \rtimes M)$
induced by the canonical homomorphism $A \to A \rtimes M$. 
The kernel of this map 
$\Zz  \otimes_{\Zz M} K_i(A) \to K_i(A \rtimes M)$
is annihilated by multiplication with $\abs{M}$ by 
Corollary~\ref{cor:A_rtimes_finite}. This already implies injectivity of the map in \eqref{bij?} after inverting $m$. But this also implies that the long exact 
sequence~\eqref{long_exact_sequence_for_KG(Artimes_G)}
yields after inverting $m$ the short exact sequence
\begin{multline}
\label{ses}
0 \to \bigoplus_{M} \Zz \otimes_{\Zz M} K_i(A)[\tfrac{1}{m}] 
\\
\to 
K_i^G(EG;A)[\tfrac{1}{m}] \oplus \bigoplus_{M \in \cM} K_i(A \rtimes M)[\tfrac{1}{m}]
\\
\to
K_i(A \rtimes G)[\tfrac{1}{m}] \to 0
\end{multline}
where we used that $G$ is amenable, hence satisfies the Baum-Connes conjecture with coefficients, i.e., $\asmb_i: K_i^G(\eub{G};A) \to K_i(A \rtimes G)$ is an isomorphism (see \cite{Hig-Kas}). Exactness of \eqref{ses} immediately yields surjectivity of the map in \eqref{bij?} because $\bigoplus_{M} \Zz \otimes_{\Zz M} K_i(A)[\tfrac{1}{m}] \to \bigoplus_{M \in \cM} K_i(A \rtimes M)[\tfrac{1}{m}]$ is induced by the canonical homomorphisms $A \to A \rtimes M$ as explained above and $\bigoplus_{M \in \cM} K_i(A \rtimes M)[\tfrac{1}{m}] \to K_i(A \rtimes G)[\tfrac{1}{m}]$ is induced by the canonical homomorphisms $A \rtimes M \to A \rtimes G$. This proves our claim.
\end{proof}

\begin{lemma} \label{lem:injectivity_for_F_to_Zn_rtimes_F_otimes}
For $i=0$ or $1$, suppose that the map $K_i(A) \to K_i(A\rtimes \Zz^n)$ is injective after inverting $\abs{F}$. Then the map $\Zz \otimes_{\Zz F} K_i(A) \to \Zz \otimes_{\Zz F} K_i(A \rtimes \Zz^n)$ coming from the canonical homomorphism $A \to A \rtimes \Zz^n$ is injective after inverting $\abs{F}$.
\end{lemma}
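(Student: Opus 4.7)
The plan is to exploit Maschke's theorem: once $\abs{F}$ is invertible, the group ring $\Zz[\tfrac{1}{\abs{F}}]F$ is semisimple, so the coinvariants functor $\Zz \otimes_{\Zz F} -$ becomes exact. Applying this exact functor to the assumed injection will yield the conclusion essentially for free.

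The first step is to verify that the canonical inclusion $\iota: A \hookrightarrow A \rtimes \Zz^n$ is $F$-equivariant for the natural $F$-actions. Here $F$ acts on $A$ through the restricted $G$-action, and on $A \rtimes \Zz^n$ through conjugation, $c(f)(a \cdot z) = (f \bullet a) \cdot (fzf^{-1})$, as recalled just before the theorem. Restricted to the subalgebra $A \subseteq A \rtimes \Zz^n$, this reduces to $c(f)(a) = f \bullet a$, confirming that $\iota$ is $F$-equivariant. Consequently the induced map $\iota_*: K_i(A) \to K_i(A \rtimes \Zz^n)$ is $\Zz F$-linear, and by hypothesis $\iota_* [\tfrac{1}{\abs{F}}]$ is an injective $\Zz[\tfrac{1}{\abs{F}}]F$-module homomorphism. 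Note that the $F$-action on both sides agrees with the one implicit in the formation of $\Zz \otimes_{\Zz F} K_i(-)$, since in the normal subgroup situation of Lemma~\ref{lem_double_coset_for_H_is_K_subset_G-normal_fin_index} the $G/\Zz^n = F$-action on $K_i(A \rtimes \Zz^n)$ is given by $c(f)$, and the $F$-action on $K_i(A) = K_i(A \rtimes \gekl{1})$ is just the action coming from the restricted $G$-action on $A$.

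The second step is the exactness of coinvariants after inverting $\abs{F}$. The idempotent $e = \tfrac{1}{\abs{F}} \sum_{f \in F} f$ in $\Zz[\tfrac{1}{\abs{F}}]F$ exhibits the trivial representation $\Zz[\tfrac{1}{\abs{F}}]$ as a direct summand of the regular representation, so $\Zz[\tfrac{1}{\abs{F}}]$ is a projective $\Zz[\tfrac{1}{\abs{F}}]F$-module; equivalently $\Zz[\tfrac{1}{\abs{F}}]F$ is semisimple by Maschke's theorem. Hence the functor $\Zz[\tfrac{1}{\abs{F}}] \otimes_{\Zz[\tfrac{1}{\abs{F}}]F} -$ is exact. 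Since this functor coincides with $\Zz \otimes_{\Zz F} -$ followed by inverting $\abs{F}$, applying it to the injection $\iota_* [\tfrac{1}{\abs{F}}]$ produces an injection $(\Zz \otimes_{\Zz F} K_i(A))[\tfrac{1}{\abs{F}}] \to (\Zz \otimes_{\Zz F} K_i(A \rtimes \Zz^n))[\tfrac{1}{\abs{F}}]$, which is precisely the desired statement.

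I do not expect a genuine obstacle in this argument: the substantive content is only the classical fact that the a priori merely right-exact coinvariants functor becomes exact once the group order is inverted. The only bookkeeping required is to check that the $F$-actions entering the definition of $\Zz \otimes_{\Zz F}$ on both sides are indeed intertwined by $\iota_*$, which is immediate from the definitions.
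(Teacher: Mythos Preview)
Your proof is correct and rests on the same underlying fact as the paper's, namely that the coinvariants functor $\Zz \otimes_{\Zz F} -$ becomes exact once $\abs{F}$ is inverted. The paper packages this slightly differently: rather than invoking Maschke's theorem to see that $\Zz[\tfrac{1}{\abs{F}}]$ is projective over $\Zz[\tfrac{1}{\abs{F}}]F$, it writes down the norm map $N_F: \Zz \otimes_{\Zz F} M \to M^F$ (which is an isomorphism after inverting $\abs{F}$) and composes with the inclusion $M^F \hookrightarrow M$, thereby factoring the coinvariants map through the original map $K_i(A) \to K_i(A \rtimes \Zz^n)$ whose injectivity is assumed. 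Your abstract formulation and the paper's explicit diagram are two ways of expressing the same averaging trick, so there is no substantive difference.
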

\begin{proof}
Consider the following commutative diagram
$$\xymatrix{
\Zz \otimes_{\Zz F} K_i(A) \ar[r] \ar[d]^{N_F}
&
\Zz \otimes_{\Zz F} K_i(A \rtimes \Zz^n)\ar[d]^{N_F}
\\
K_i(A)^F \ar[r] \ar[d] 
& 
K_i(A \rtimes \Zz^n)^F \ar[d]
\\
K_i(A) \ar[r] 
& 
K_i(A \rtimes \Zz^n)}
$$
The vertical maps denoted by $N_F$ are given by multiplication with the norm element $N_F$ and are isomorphisms after inverting $\abs{F}$. The two lower vertical arrows are the canonical inclusions. The horizontal arrows are induced by the canonical homomorphism $A \to A \rtimes \Zz^n$. Since the lower horizontal arrow is injective after inverting $\abs{F}$ by assumption, the same is true for the upper horizontal arrow.
\end{proof}

\begin{proof}[Proof of Theorem~\ref{the:injectivity_for_F_to_Zn_rtimes_F}]
By assumption, the canonical homomorphism $A \to A \rtimes \Zz^n$ is injective in K-theory once we invert $\abs{F}$. Hence Lemma~~\ref{lem:injectivity_for_F_to_Zn_rtimes_F_otimes} implies that $\Zz \otimes_{\Zz F} K_i(A) \to \Zz \otimes_{\Zz F} K_i(A \rtimes \Zz^n)$ is injective after inverting $\abs{F}$. Now Corollary~\ref{cor:A_rtimes_finite} tells us that the map
$\Zz \otimes_{\Zz F} K_i(A \rtimes \Zz^n) \to K_i((A \rtimes \Zz^n) \rtimes F) \cong K_i(A \rtimes G)$ 
is injective after inverting $\abs{F}$. 
Hence $\Zz \otimes_{\Zz F} K_i(A) \to K_i(A \rtimes G)$ is injective after inverting $\abs{F}$. 
Finally apply Lemma~\ref{lem:A_rtimesG_for_Zn_to_G_to_F} in the case $M = F$.
\end{proof}

\subsection{Injectivity}
Finally, we are ready for the
\bproof[Proof of Proposition~\ref{K_0-inj}]
Let $c$ be some element in $R \reg$. Since the additive action of $c^{-1} R$ is homotopic to the trivial action, an iterative application of the Pimsner-Voiculesu sequence implies that the canonical homomorphism $C_0(\Az_{\infty}) \to C_0(\Az_{\infty}) \rtimes (c^{-1} R)$ is injective on $K_0$. Thus Theorem~\ref{the:injectivity_for_F_to_Zn_rtimes_F} yields that $C_0(\Az_{\infty}) \rtimes \mu \to C_0(\Az_{\infty}) \rtimes (c^{-1} R) \rtimes \mu$ is injective on $K_0$ after inverting $\abs{\mu}$. By equivariant Bott periodicity (see \cite[Theorem~20.3.2]{Bla}, $n$ is even), we know that $K_0(C_0(\Az_{\infty}) \rtimes \mu) \cong K_0(C^*(\mu))$ and the latter group is free abelian. Thus the canonical homomorphism $C_0(\Az_{\infty}) \rtimes \mu \to C_0(\Az_{\infty}) \rtimes (c^{-1} R) \rtimes \mu$ itself must be injective on $K_0$. But then, since
\bgloz
  C_0(\Az_{\infty}) \rtimes K \rtimes \mu = \overline{\bigcup_{c \in R \reg} C_0(\Az_{\infty}) \rtimes (c^{-1} R) \rtimes \mu},
\egloz
the canonical homomorphism $C_0(\Az_{\infty}) \rtimes \mu \to C_0(\Az_{\infty}) \rtimes K \rtimes \mu$ must be injective on $K_0$ as well by continuity of $K_0$.
\eproof

\end{document}